\documentclass[onefignum,onetabnum]{siamart171218}



\usepackage{lipsum}
\usepackage{amsfonts}
\usepackage{graphicx}
\usepackage{epstopdf}
\ifpdf
  \DeclareGraphicsExtensions{.eps,.pdf,.png,.jpg}
\else
  \DeclareGraphicsExtensions{.eps}
\fi


\newsiamremark{remark}{Remark}
\newsiamremark{hypothesis}{Hypothesis}
\crefname{hypothesis}{Hypothesis}{Hypotheses}
\newsiamthm{claim}{Claim}

\headers{Least-squares spectral methods}{B. Hashemi and Y. Nakatsukasa }

\title{Least-squares spectral methods for ODE eigenvalue problems\thanks{Version of \today.
\funding{The work of the first author was supported by Iran National Science Foundation
(INSF) grant 98012590.}}}

\author{Behnam Hashemi\thanks{Department of Mathematics, Shiraz University of Technology, Modarres Blvd., Shiraz 71555-313, Iran. (\email{hashemi@sutech.ac.ir}, \url{https://sites.google.com/view/bhashemi}).}
\and 
Yuji Nakatsukasa\thanks{Mathematical Institute, University of Oxford, Oxford, OX2 6GG. (\email{nakatsukasa@maths.ox.ac.uk}, \url{https://people.maths.ox.ac.uk/nakatsukasa/}).}
}

\usepackage{amsopn}
\DeclareMathOperator{\diag}{diag}

\DeclareMathOperator*{\minimize}{minimize}




\interfootnotelinepenalty=10000 

\newcommand\quasi[1]{\mathsf{#1}} 

\newtheorem{example}{Example}
\usepackage{graphicx}
\graphicspath{{figures/}}

\usepackage[normalem]{ulem}
\usepackage{multirow}
\usepackage{epstopdf}
\usepackage{listings}
\usepackage[]{mcode}
\usepackage{algorithm}
\usepackage{latexsym}
\usepackage{showidx}
\usepackage{latexsym}
\usepackage{amssymb}

\newcommand{\eqnref}[1]{(\ref{#1})}
\newcommand{\ignore}[1]{}

\usepackage{upquote}
\usepackage{graphicx,color}
\usepackage{algcompatible}
\usepackage{latexsym}
\usepackage{amssymb}
\usepackage{amsmath}
\usepackage{pdflscape}

\newcommand\ip[2]{\langle {#1},{#2} \rangle}

\usepackage{mathrsfs}
\usepackage{bigfoot}

\def\Cz{\mathbb{C}}

\begin{document}
\maketitle

\begin{abstract}
We develop spectral methods for ODEs and operator eigenvalue problems
that are based on a least-squares formulation of the problem.\ The key tool is a method for rectangular generalized eigenvalue problems, 
which we extend to quasimatrices and objects combining quasimatrices and matrices. 
The strength of the approach is its flexibility that lies in the quasimatrix formulation allowing the basis functions to be chosen arbitrarily (e.g. those obtained by solving nearby problems), 
and often giving high accuracy. We also show how our algorithm can easily be modified to solve problems with eigenvalue-dependent boundary conditions, and discuss reformulations as an integral equation, which often improves the accuracy. 
\end{abstract}

\begin{keywords}
operator eigenvalue problems, least-squares method, rectangular matrix pencils, spectral methods, quasimatrix
\end{keywords}

\begin{AMS}
47A75, 65N35, 65F25, 65F15
\end{AMS}

\section{Introduction}\label{sec:intro}
Consider a differential eigenvalue problem 
\begin{equation}
  \label{eq:diffeq}
\mathcal{L} u = \lambda u, 
\end{equation}
where $\mathcal{L}$ is a linear differential operator of order $d$, mapping a univariate function $u:[-1,1]\rightarrow \mathbb{R}$ to another function 
$Lu:[-1,1]\rightarrow \mathbb{R}$, and $b_i(u,u', \dots, u^{(d)}, \lambda)=0$ for $i=1,\ldots,d$, are homogeneous boundary conditions (which we allow to depend affinely on $\lambda$). We assume that the spectrum of~\eqnref{eq:diffeq} is discrete and that all eigenvalues have finite algebraic multiplicity. The goal is to compute the (selected) eigenvalues $\lambda$ and their corresponding eigenfunctions $u$. 

Operator eigenvalue problems have a long history. Classical problems include the Schr\"odinger equation in quantum mechanics~\cite{griffiths2018introduction}, the Sturm-Liouville equation~\cite{folland1992fourier}, and the Orr-Sommerfeld equation in fluid dynamics~\cite{orszag1971accurate}. A number of further examples can be found in~\cite{trefethen2017exploring,trefethenEmbree}, which are illustrated in~\cite{trefethen2017exploring} using Chebfun~\cite{Chebfun14}.

In this paper we develop an algorithm for solving operator eigenvalue problems~\eqref{eq:diffeq}. The focus is on spectral methods, which are one of the main computational frameworks for solving ODEs of the form $\mathcal{L}u=f$ or~\eqnref{eq:diffeq}. Broadly speaking, the guiding principle for these methods is as follows.
\begin{enumerate}
\item Form a set of basis functions $\{ u_1,\ldots,u_n\}$, where each $u_i:[-1,1]\rightarrow \mathbb{R}$. 
\item Find an approximate solution $u = \sum_{i=1}^n c_i u_i \in \quasi{U} := \mbox{span}\{ u_1,\ldots,u_n\}$.

\end{enumerate}
Spectral methods differ with respect to how the basis functions are chosen and how the approximate solution $u\in \quasi{U}$ is determined. A typical choice of $u_i$ is the Chebyshev polynomial $T_{i-1}$, or a Fourier (trigonometric) polynomial.\ The rationale is that if the solution is smooth, then it can be approximated by such polynomials efficiently; for example one gets exponential convergence for analytic solutions~\cite{trefethenatap}. Methods with this property are called spectrally accurate. We list some variants of spectral methods below. We describe the methods both for operator equations (ODEs) $\mathcal{L}u=f$ and eigenvalue problems~\eqref{eq:diffeq}. 

\begin{itemize}
\item Spectral Galerkin methods: Choose $u_i$ to satisfy the boundary conditions. 
Then choose $c_i$ by enforcing that the residual is orthogonal
to $\mbox{span}(\quasi{V})$ (usually $\quasi{V} = \quasi{U}$\footnote{We follow a common abuse of notation that allows a matrix to represent its span too. Here, $\quasi{U}$ or $\quasi{V}$ may denote a quasimatrix or the space spanned by its columns; see comments near the end of this section.}). For ODEs this becomes $\ip{v}{\mathcal{L}u-f}=0$ for all $v\in \quasi{V}$. For eigenvalue problems, the condition is $\ip{v}{\mathcal{L}u-\lambda u}=0$. Here and below,  the inner product is often the $L_2$-inner product $\ip{u}{v} = \int_{-1}^1 u(x) v(x)\ dx$. 
  \item tau method: Choose $u_i$ as Chebyhev polynomials\footnote{Legendre polynomials have also been used in the literature \cite{Canuto2006}, although to a lesser extent.} and apply boundary conditions as side constraints (boundary bordering). Then choose $c_i$ via imposing that the residual is a linear combination of high-degree Chebyshev polynomials. For ODEs this means $\mathcal{L}u-f=\sum_{i\geq n-d}\tau_i T_i$ and for eigenvalue problems it is equivalent to $\mathcal{L}u-\lambda u=\sum_{i\geq n-d}\tau_i T_i$.  

\item Spectral collocation methods~\cite{trefethen2000spectral}: Choose $u_i$ to be smooth functions, usually Chebyshev or Fourier polynomials. Choose ${\bf c}=[c_1,\ldots,c_n]^T$ (the solution $u$ is usually represented via the values at $x_i$; mathematically this is equivalent to choosing ${\bf c}$) by forcing the equation to hold ($\mathcal{L}u-f=0$ or $\mathcal{L}u-\lambda u=0$) at these points $(x_i)_{i=0}^n$, called collocation points. Impose boundary conditions by replacing rows of the equation. Chebfun's default ODE solver is based on a collocation approach~\cite{aurentz17, driscoll2016rectangular, trefethen2017exploring}. 
  \end{itemize}
\vspace{3mm}

Spectral methods are used in Chebfun for solving both operator equations and operator eigenvalue problems to high accuracy.\ Chebfun exploits block structure information about the corresponding operators at the continuous level. Such information includes the index, nullity and deficiency of the operator, whose connection to the amount of rectangularity of the operator is made precise in~\cite{aurentz17}. It is this rectangularity that clarifies the right size of the rectangular discretization matrix~\cite{driscoll2016rectangular} to be used in numerical computation. 

In this paper we consider a framework in which, conceptually, we would like to 
\[
\minimize_{u\in \quasi{U}} \left\|
\begin{bmatrix}
  \mbox{residual in operator equation with $u$}\\
  \mbox{residual in boundary conditions with $u$}
\end{bmatrix}
\right\|_2.
\]
Here the norm $\|\cdot\|_2$ is applied to an object of the form $
\begin{bmatrix}
u\\ {\bf b}
\end{bmatrix}=
\begin{bmatrix}
\mbox{function}\\  \mbox{vector}
\end{bmatrix}
$. 
We define this by\footnote{Different weightings can be considered, i.e., $\left\|\begin{bmatrix}u\\ {\bf b}\end{bmatrix}\right\|_2 = \sqrt{\alpha\|u\|_{L_2}^2+\beta\|{\bf b}\|_2^2}$ for some $\alpha,\beta>0$. We will revisit this in Section~\ref{sec:bcexact}.
} $\left\|\begin{bmatrix}u\\ {\bf b}\end{bmatrix}\right\|_2 = \sqrt{\|u\|_{L_2}^2+\|{\bf b}\|_2^2} = \sqrt{\int_{-1}^1|u(x)|^2\ dx + \|{\bf b}\|_2^2}$. 
Specifically, we focus on~\eqref{eq:diffeq} and develop spectral methods which proceed as follows. Start with (or keep building) a set of basis functions $\{\phi_1,\ldots,\phi_n\}$, and consider finding the function in $\quasi{U} := \mbox{span} \{\phi_1,\ldots,\phi_n\}$ such that the residual is minimized, that is,
\begin{equation} 
 \label{eq:lsprob}
\minimize_{u\in \quasi{U},\lambda\atop \|u\|_2=1} \left\|
\begin{bmatrix}
\mathcal{L} u - \lambda u  \\
b_1(u,u', \dots, u^{(d)}, \lambda)\\
\vdots\\
b_d(u,u', \dots, u^{(d)}, \lambda)
\end{bmatrix}
\right\|_2. 
\end{equation}
Note that the boundary conditions are part of the objective function; thus they are enforced approximately. This lets us start with any set of basis functions $\quasi{U}$, unlike the traditional Galerkin method where the basis functions are usually chosen a priori to satisfy the boundary conditions~\cite{gottlieb1977numerical,hesthaven2007spectral}, making the method inapplicable when the boundary conditions depend on $\lambda$. If it is important to impose the boundary conditions strictly, we can apply a large weight to the last $d$ components of the objective function. We also describe imposing them exactly in Section~\ref{sec:bcexact}. 

In order to solve~\eqref{eq:lsprob} we formulate it as a
`infinite+finite' dimensional, rectangular eigenvalue problem. 
We then solve it using a generalization of the algorithm by Ito and Murota~\cite{itomurota2016}, originally designed for discrete rectangular eigenvalue problems. 

Our proposed algorithm falls in the category of spectral methods. It has the following properties: 
\begin{itemize}
\item No conditions are imposed on the basis functions $u_1,\ldots,u_n$, giving the flexibility to choose them freely. 
\item The least-squares formulation directly minimizes the residual. This makes the convergence analysis trivial in some sense: the convergence speed is precisely the speed at which the basis functions are able to approximate the solution. 
\end{itemize}

Our method has the property that it allows  $u_i$ to be chosen arbitrarily, yet it has a mixture of a Galerkin (in the sense that the ``joint'' residual is orthogonal to a well-chosen subspace) and a collocation flavor (where the collocation points are chosen to be the entire domain; of course the operator equation cannot be imposed exactly everywhere, so instead we find the  least-squares fit).

\subsection{Contributions}
Below we list some of the advantages of our least-squares approach.

\paragraph{Flexibility: nonstandard basis functions}
We believe the most significant advantage of our approach is the flexibility in terms of the choice of the basis functions. In classical approaches---either collocation or Galerkin---the basis functions are chosen a priori, usually as degree-graded polynomials or Fourier series. Such expansion is often very effective, and convergence is exponential provided that the solution is analytic in the domain. However, when the solution is  not smooth, these methods converge slowly. 
By contrast, in our approach the basis functions are allowed to be arbitrary. In particular, when prior knowledge about the solution is available, such as the location of a singularity or solutions for nearby problems, then by incorporating tailor-made functions into the basis functions we can dramatically accelerate the convergence, as we illustrate through numerical examples. 

\paragraph{Flexibility: $\lambda$-dependent boundary conditions}
Our formulation also naturally allows for exotic boundary conditions, in particular, those that depend affinely on the eigenvalue $\lambda$, that is, the boundary conditions are of the form $b_i(u,u', \dots, u^{(d)}, \lambda)=\lambda (\sum_{i=1}^{d}c_iu^{(d)})+(\sum_{i=1}^{d}\tilde c_iu^{(d)}) = 0$ where $c_i,\tilde c_i\in\mathbb{C}$. An example is the indefinite problem $-u'' = \lambda u$ on $[0, \pi/2]$ with boundary conditions $u'(0) = \lambda \big( \frac{3}{2} u(0) + u'(0)\big)$, and $u'(\frac{\pi}{2}) = 0$\footnote{This particular problem is studied in~\cite[p. 265]{Pryce93}, which shows that it has three eigenfunctions that have no roots on $(0, \pi/2)$.}. Another example is $u^{(4)} = \lambda u''$ with boundary conditions $u(0) = u'(0) = 0$, $u''(1) = 0$, $u^{(3)}(1) - \lambda \gamma u'(1) = 0$ where $\gamma \in [0, 1]$. This problem arises in the study of critical loads for divergence of a clamped-free elastic bar; see~\cite{Marletta03} and the references therein.

We shall show that it is straightforward to solve such problems using our framework. While it is not impossible to do so with classical methods, with our approach one can deal with such problems with minimal modification. 

\paragraph{Flexibility: generalized eigenvalue problems}
Our approach also make it straightforward to treat \emph{generalized} operator eigenvalue problems of the form 
\begin{equation}
  \label{eq:diffeq:gep}
\mathcal{L}_A u = \lambda\mathcal{L}_Bu, 
\end{equation}
where $\mathcal{L}_A$ and $\mathcal{L}_B$ are linear differential operators  of orders $d$ and $p$, respectively. Again the boundary conditions may be $\lambda$-dependent. 

\paragraph{Miscellaneous}
Our least-squares approach also allows for variants that can often be formulated in a straightforward fashion. As an example, we discuss a reformulation of the differential equation as an integral equation, which is known (as shown in Greengard's papers, e.g.~\cite{greengard1991spectral}) to often reduce the condition number of the  discretized problem  and therefore provide more accurate solutions than standard approaches for differential equations. This idea was revisited by Driscoll using Chebfun~\cite{driscoll2010automatic}. We observe that the same holds also for our algorithm.

\paragraph{Disadvantages}
Having described the advantages of our least-squares approach, it is important to also highlight the drawbacks of our approach. Most notably, it relies heavily on one's ability to work with and manipulate quasimatrices. Such operations are conveniently offered by Chebfun (and indeed the essence of our algorithms can be implemented in a few lines of MATLAB code), but otherwise (on other platforms) the algorithm can be nontrivial to implement (to emulate the method approximately, one can take many sample points in the domain and perform a least-squares fit). 
In addition, for ``easy'' problems with smooth solutions, classical algorithms work exceptionally well, with exponential convergence and fast solution of the discretized linear algebra problem often available. Our approach will not be more efficient in such settings (e.g. when one takes $U$ to be the same polynomial basis.) 

This paper is organized as follows.\ In Section~\ref{sec:LSode} we describe our algorithm LSode for ODEs. In Section~\ref{sec:ode} we explain our formulation for ODE eigenvalue problems. Section~\ref{sec:svd} describes the SVD of objects containing quasimatrices and matrices, which is needed for our eigenvalue algorithm. We then introduce our algorithm LSeig in Section~\ref{sec:LSeig}. In Section~\ref{sec:bc} we discuss issues related to boundary conditions, namely imposing them exactly, and nonstandard conditions involving $\lambda$.\ Section~\ref{sec:integral} treats reformulations into an integral equation and its adaptation to LSeig.\ We present numerical experiments in Section~\ref{sec:exp} to illustrate the performance of LSeig. 

\emph{Notation}. 
We denote by $u'$, $u''$ the first and second derivative of a function $u$, and more generally $u^{(d)}$ denotes the $d$-th derivative. We use lower-case boldface letters for vectors, simple capital letters for matrices and calligraphic letters for operators. 

We denote quasimatrices by sans-serif capital letters, e.g. $\quasi{A}$. A (column) quasimatrix $\quasi{A}$ is an $\infty \times n$ `matrix' whose columns $a_j$ are functions rather than discrete vectors. This means that the first index of a rectangular matrix becomes continuous while the second one remains discrete. Thus, $\quasi{A}$ represents a linear transformation from $\Cz^{n}$ to $L^2 [a,b]$. 
We mainly treat problems on $[-1,1]$ for concreteness, but the extension to $[a,b]$ is straightforward.
The term quasimatrix was coined by Stewart \cite[p. 33]{stewart98}. Quasimatrices are discussed by de Boor~\cite{deBoor91} and Trefethen and Bau \cite[pp. 52-54]{trefethenNLAbook}. They became part of computational practice with the introduction of Chebfun in 2004 \cite{battles05, battles04, trefethen2013householder}. 

In what follows, $\quasi{U}=[u_1 \dots,u_n] \in \mathbb{R}^{\infty\times n}$ denotes a quasimatrix~\cite{Chebfun14} whose columns are the basis functions, whose choice is made a priori and often nonstandard and problem-dependent (one can take $u_i$ to be polynomials but the strength of our approach is exhibited with other choices; we shall discuss several specific choices for specific problems). 

We will make extensive use of decompositions of quasimatrices, in particular the QR factorization and SVD.\ For details on these see~\cite{townsend15,trefethen2013householder} and~\cite[chap. 6]{Chebfun14}. Almost all operations with matrices have counterparts for quasimatrices. For example for a function $u$ and a quasimatrix $\quasi{Q}=[q_1,\ldots,q_n]\in\mathbb{C}^{\infty \times n}$, we let $\quasi{Q}^Tf$ denote the $n\times 1$ vector whose $i$th element is $\ip{q_i}{f}$. Moreover, a nonstandard (but straightforward) object that we need to deal with extensively is a ``quasimatrix-matrix'' of the form 
$\begin{bmatrix}
  \quasi{U} \\ B\end{bmatrix}$, where $\quasi{U}\in\mathbb{C}^{\infty \times n}$ is a quasimatrix and $B$ is a $d\times n$ matrix. 
We say that such an object is of size\footnote{Perhaps it is more accurate to say they are of size $([-1,1]+d) \times n$. We use $(\infty+d) \times n$ for brevity.} ``$(\infty+d) \times n$''.
These are an instance of block operators introduced in~\cite{aurentz17}. 
We use standard operations with such objects, including multiplication by a vector ${\bf v}\in\mathbb{C}^{n}$ as defied by the formula
$\begin{bmatrix}
  \quasi{U} \\ B\end{bmatrix} {\bf v} =
\begin{bmatrix}
  \quasi{U} {\bf v}\\B{\bf v}
\end{bmatrix}
$, which yields a ``function-vector'' object.

\section{Least-squares formulation for ODE $\mathbf{\mathcal{L}u=f}$} \label{sec:LSode}
Our emphasis is on operator eigenvalue problems, but for a gentle introduction we first discuss solving differential equations $\mathcal{L}u=f$ with boundary conditions $b_i(u,u',\ldots,u^{(d)})=f_{b_i}$ for $i=1,\ldots,d$, where $f_{b_i}\in\mathbb{R}$ and $b_i$ is a linear function of its arguments. 

The main idea is to find $u\in U$ such that 
\begin{equation}
  \label{eq:goal1}
\minimize_{u\in U}\left\|
  \begin{matrix}
\mathcal{L}u-f  \\
 b_1(u,\ldots,u^{(d)})-f_{b_1}   \\
\vdots\\
 b_d(u,\ldots,u^{(d)})-f_{b_d}      
  \end{matrix}
\right\|_2, 
\end{equation}
that is, the norm  is minimized of the function-vector consisting of the residual of the operator and boundary conditions.

\paragraph{Basis functions satisfying the boundary conditions}
For simplicity first consider the case where the basis functions $u_j$ satisfy the boundary conditions $b_i(u_j,u_j',\ldots)=0$. Then 
the bottom $d$ elements are 0 and writing the solution $u=\sum_{i=1}^nc_iu_i$,~\eqref{eq:goal1} reduces to 
\[
\minimize_{u\in U}\|\mathcal{L}u-f\|_2 = \minimize_{{\bf c}\in \mathbb{C}^{n}}\|(\mathcal{L} \quasi{U}) {\bf c} - f\|_2. 
\]
This is a least-squares problem involving a quasimatrix. As with a matrix least-squares problem, it can be solved via the quasimatrix QR factorization\footnote{This is a ``thin'' QR factorization. For quasimatrices it serves little purpose to think of the ``full'' QR, as the functions live in an infinite-dimensional space.} $\mathcal{L} \quasi{U} = \quasi{Q} R$, where $\quasi{Q}\in\mathbb{C}^{\infty \times n}$ 
is orthonormal $\quasi{Q}^T \quasi{Q} = I_n$ (the $(i,j)$ entry of $\quasi{Q}^T \quasi{Q}$ is $\ip{q_i}{q_j}$) and $R\in\mathbb{C}^{n \times n}$ is upper-triangular~\cite{trefethen2013householder}, as ${\bf c}=R^{-1} \quasi{Q}^T f$. 
The formulation is mathematically (but not numerically) equivalent to Galerkin with the choice $\quasi{V}:= \mathcal{L} \quasi{U}$. 

\paragraph{General basis functions}
More generally, when the basis functions $u_j$ do not satisfy the boundary conditions, we rewrite~\eqref{eq:goal1} as  
finding the minimizer for 
\begin{equation}
  \label{eq:goal2}
\min_{{\bf c}\in \mathbb{C}^n}\left\|
  \begin{matrix}
(\mathcal{L} \quasi{U}) {\bf c}-f  \\
\footnotesize b_1(\quasi{U} {\bf c},(\quasi{U} {\bf c})',\ldots,(\quasi{U} {\bf c})^{(d)}) -f_{b_1}   \\
\vdots\\
 b_d(\quasi{U}{\bf c},(\quasi{U} {\bf c})',\ldots,(\quasi{U} {\bf c})^{(d)})  -f_{b_d}
\normalsize
  \end{matrix}
\right\|_2
=\min_{{\bf c}\in \mathbb{C}^n}\left\|
  \begin{matrix}
    \begin{bmatrix}
\mathcal{L} \quasi{U}\\
B      
    \end{bmatrix} {\bf c}
-
\begin{bmatrix}
f  \\
{\bf f}_b
\end{bmatrix}
  \end{matrix}
\right\|_2
\end{equation}
where $B\in\mathbb{C}^{d\times n}$ and ${\bf b}\in\mathbb{C}^d$. 

The problem~\eqref{eq:goal2} is still solvable, once one has the QR factorization for 
the quasimatrix-matrix 
$\begin{bmatrix}
\mathcal{L} \quasi{U}\\ 
B      
\end{bmatrix}$, which can be obtained as follows. We first compute the quasimatrix QR factorization $\mathcal{L} \quasi{U} = \quasi{Q}_L R_L$ \cite{trefethen2013householder} and the standard matrix QR factorization $B=Q_B R_B$, so that
\begin{equation}
  \label{eq:QRobj}
\begin{bmatrix}
  \mathcal{L}\quasi{U} \\B
\end{bmatrix} = 
\begin{bmatrix}
  \quasi{Q}_L R_L\\Q_B R_B
\end{bmatrix}  =
\begin{bmatrix}
  \quasi{Q}_L& \quasi{0} \\0  & Q_B
\end{bmatrix} 
\begin{bmatrix}
  R_L\\R_B
\end{bmatrix}.  
\end{equation}
Note that $\quasi{\tilde Q}:=\begin{bmatrix}  \quasi{Q}_L& \quasi{0} \\0  & Q_B\end{bmatrix}\in\mathbb{C}^{(\infty+d)\times 2n} $ is an orthonormal quasimatrix-matrix satisfying $\quasi{\tilde Q}^T \quasi{\tilde Q} = I_{2n}$. (For the quasimatrix-matrix $\quasi{\tilde Q}$ we use the same sans-serif font as quasimatices.) One can complete the QR factorization via another thin QR 
$\begin{bmatrix}
  R_L\\R_B 
\end{bmatrix} = \tilde QR, $
giving 
\begin{equation}
  \label{eq:LUBqr}
\begin{bmatrix}
  \mathcal{L} \quasi{U} \\B
\end{bmatrix} =
\left(
\begin{bmatrix}
  \quasi{Q}_L & \quasi{0} \\0  & Q_B
\end{bmatrix} 
\tilde Q
\right)
R
= \quasi{Q} R.
\end{equation}

We summarize the algorithm below. 
\begin{algorithm}[h!]
\begin{algorithmic}[1]
\STATE Form the quasimatrix $\mathcal{L} \quasi{U}$. 
\STATE Represent boundary conditions as $B{\bf c}= {\bf f_b}$, where 
${\bf c}=[c_1,\ldots,c_n]^T, u=\sum  c_i u_i$.
\STATE Compute QR factorization of the $(\infty+d)\times n$ quasimatrix-matrix 
(as in~\eqref{eq:LUBqr})
\begin{equation}
  \label{eq:QRobj2}
\begin{bmatrix}
  \mathcal{L} \quasi{U} \\B
\end{bmatrix} = \quasi{Q} R. 
\end{equation}
\STATE Obtain solution $u=\quasi{U} {\bf c}$ where ${\bf c}=R^{-1}(\quasi{Q}^*
\begin{bmatrix}
f\\ {\bf f_b}  
\end{bmatrix}
)$. 
\end{algorithmic}
\caption{LSode: Least-squares method for  $\mathcal{L} u = f$, and boundary conditions $b_i(\quasi{U}{\bf c},(\quasi{U} {\bf c})',\ldots,(\quasi{U} {\bf c})^{(d)}) = f_{b_i}$ for $i=1,\ldots,d$.
$\quasi{U}:=[u_1,\ldots,u_n]$ are the user-defined basis functions.} 
\label{LSode:alg}
\end{algorithm}

The features of this least-squares formulation include
\begin{enumerate}
\item The least-squares residual represents exactly the residual in the problem, including the boundary condition. 
\item If desired, the boundary conditions can be imposed exactly (see Section~\ref{sec:bcexact}). 
\item It has the flavor of spectral collocation methods wherein the collocation points are taken to vary continuously. It also has the Galerkin flavor, as the solution can be characterized equivalently as follows: the augmented residual $\begin{bmatrix}  \mathcal{L}u-f \\ B {\bf c} - {\bf f_b}\end{bmatrix}$ is orthogonal to $\quasi{Q}$.
\end{enumerate}
These features carry over to the eigenvalue algorithm we develop in the next section. 

We believe LSode, while conceptually straightforward, is a new algorithm:\ Boyd ~\cite[\S~3.1]{Boyd2001} briefly discusses a least-squares approach for ODEs, but suggests solving it using the normal equation (which is numerically unstable) and without addressing boundary conditions.

Let us note, in passing, that there is an attractive class of finite element methods, called Least-Squares Finite Element Methods (LSFEM), for the numerical solution of PDEs. These methods minimize the residual in the differential equation together with the residual in the boundary conditions. These methods have been motivated by the desire to recover, in general settings, the advantageous features of Rayleigh-Ritz methods; see~\cite{Bochev09} and the references therein.\ LSode is in some sense a spectral analogue of LSFEM for ODEs.

\subsection{Numerical illustration}\label{sec:LSODEex}
To illustrate the advantages offered by the flexibility of basis functions in LSode, here we 
consider building the basis functions $u_1,\ldots,u_n$ by solving a nearby ODE (which might have been solved or is easier to solve). 
Namely, consider the ``pilot'' ODE 
$\mathcal{L}_1u = f$, where $f(x)=\mbox{exp}(x)$ and 
\begin{equation}
  \label{eq:ODE1}
  \mathcal{L}_1u = u''+|x|u' ,\qquad u(\pm 1) = 0. 
\end{equation}
We use solutions for~\eqref{eq:ODE1} as the basis functions $u_i$ for solving the slightly different ODE $\mathcal{L}_2u = f$, where $\mathcal{L}_2$ is related but not equal to $\mathcal{L}_1$: 
\begin{equation}
  \label{eq:ODE2}
  \mathcal{L}_2u = u''+|x|u'+u ,\qquad u(\pm 1) = 0. 
\end{equation}
Specifically, here we used $\mathcal{L}_1$ (or its inverse) to form a Krylov subspace (informally, this is the subspace $\mbox{span}(\mathcal{L}^{-1}f,\mathcal{L}^{-2}f,\ldots,\mathcal{L}^{-n}f)$, orthogonalized by the Arnoldi process; see~\cite{gilles2019continuous} for more discussion on Krylov subspaces for differential operators). We then let $\quasi{U}=[u_1,\ldots,u_n]$ be the orthonormal basis and invoke LSode. We compare its performance with the standard choice of $u_i$, global polynomials $u_i = T_{i-1}(x)$. The results are shown in Figure~\ref{fig:nearby}. 

\begin{figure}[htbp]
  \begin{minipage}[t]{0.495\hsize}
\includegraphics[width=0.9\textwidth]{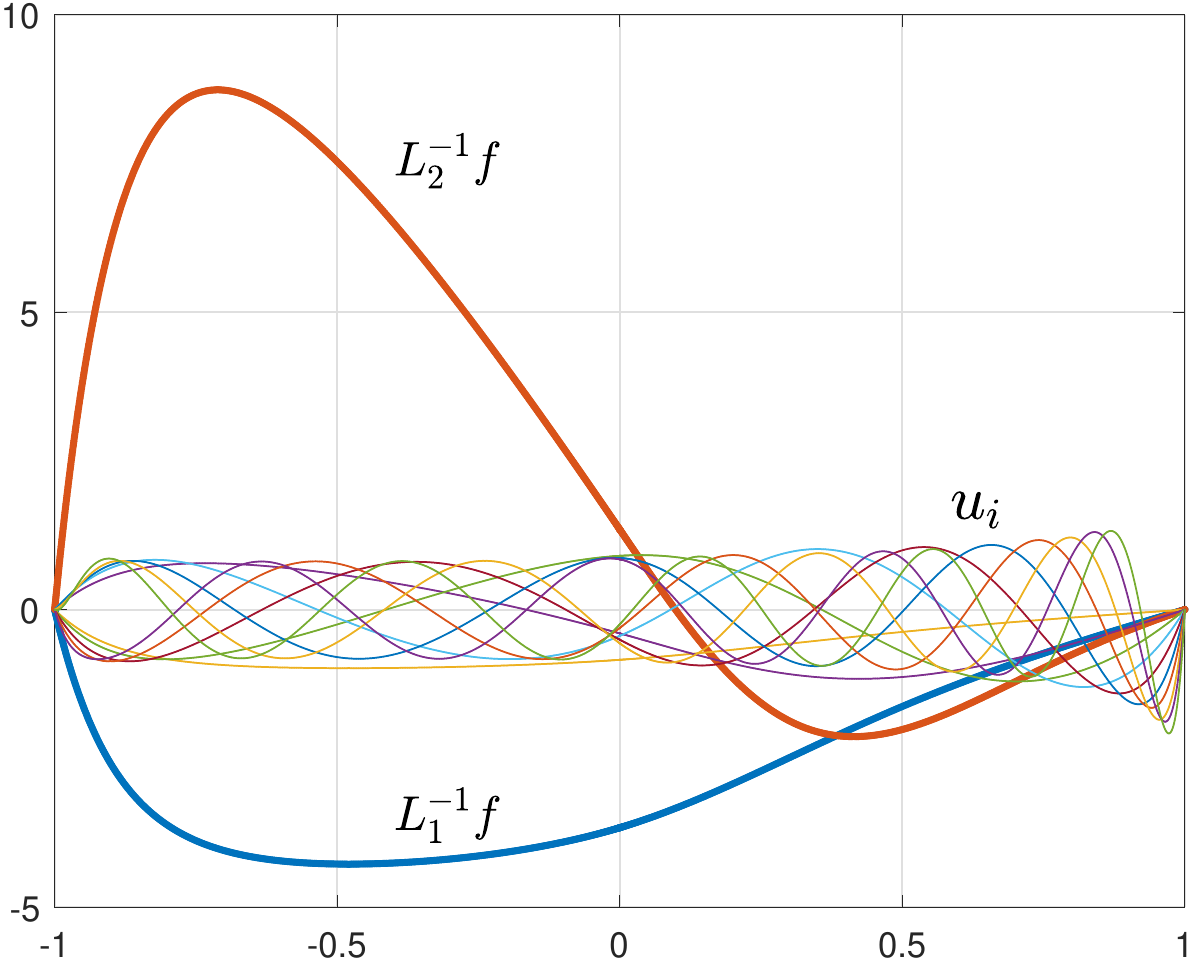}
  \end{minipage}   
  \begin{minipage}[t]{0.495\hsize}
\includegraphics[width=0.95\textwidth]{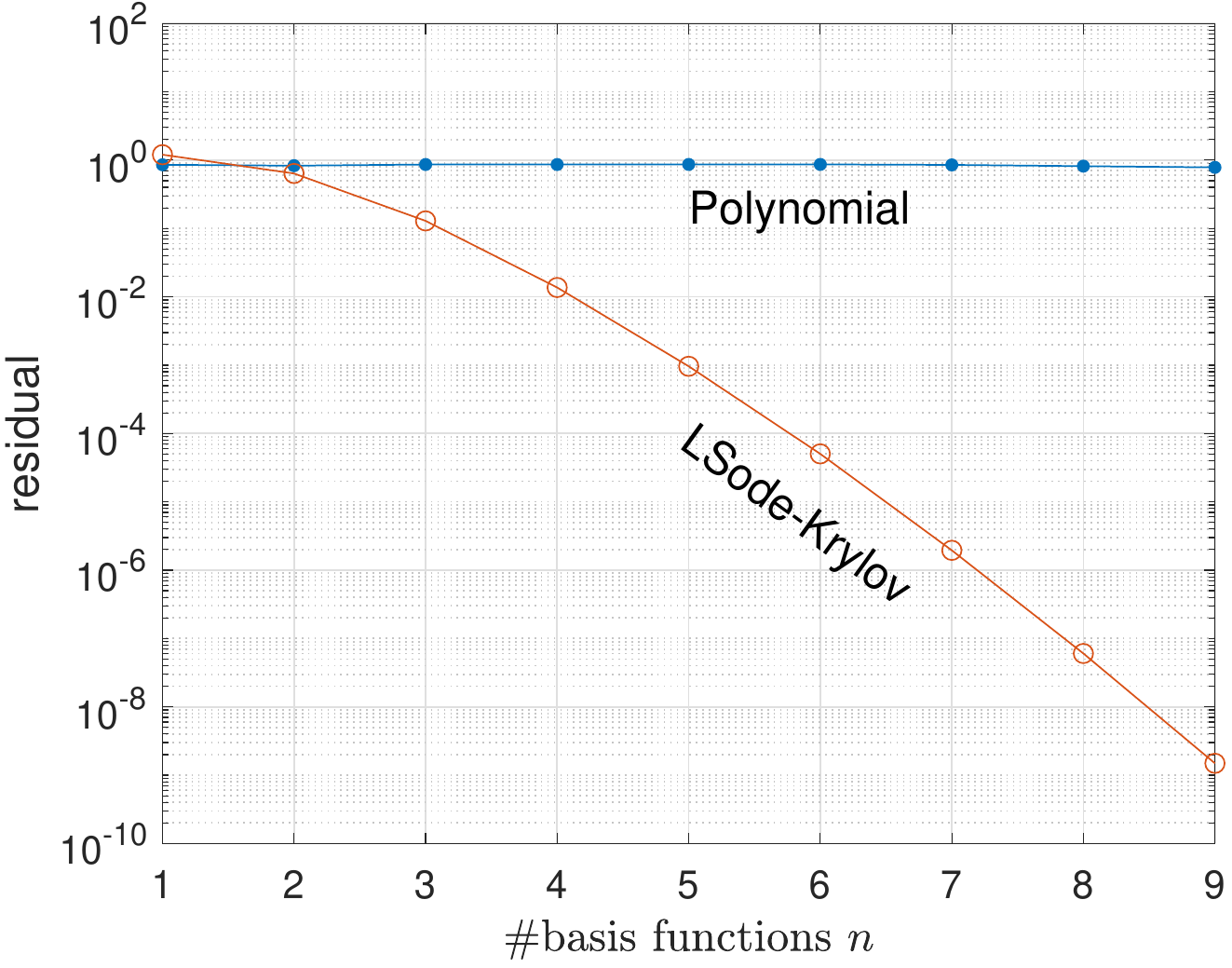}
  \end{minipage}
  \caption{
Illustration of choice of basis functions for solving~\eqref{eq:ODE2} with LSode: 
Polynomials vs. Krylov basis functions obtained by solving nearby ODEs~\eqref{eq:ODE1}. 
Left: solutions $L_1^{-1}f,L_2^{-1}f$ together with the basis functions $u_i$. Note that 
$L_1^{-1}f$ and $L_2^{-1}f$ are themselves not very similar, yet a Krylov basis w.r.t. $L_1^{-1}$ offers a powerful subspace in which to find $L_2^{-1}f$.}
\label{fig:nearby}
\end{figure}

Figure~\ref{fig:nearby} demonstrates that by using specially crafted basis functions, one can obtain much faster convergence than standard bases. The flexibility offered by the least-squares framework allows us to easily explore nonstandard basis functions. 

Note that since by definition LSode minimizes the residual (together with the boundary conditions), no standard collocation or coefficient method based on global polynomials can do better than the LSode with polynomials shown in the figure. 

Here we considered problems with the $|x|$ term in~\eqref{eq:ODE1},~\eqref{eq:ODE2} so that they are not easily solvable by a classical approach. The idea of using solutions for nearby problems appears to work well in a variety of settings (a detailed study is left for future work). Clearly, once $\quasi{U}$ is obtained, one can consider solving a host of ODEs similar to~\eqref{eq:ODE1} using the same $\quasi{U}$; this is much more efficient than solving each problem separately.

\section{Least-squares formulation for $\mathbf{\mathcal{L}u=\lambda u}$}\label{sec:ode}
Now we turn to the main topic of operator eigenvalue problems  $\mathcal{L}u=\lambda u$, with boundary conditions $b_i(u,u',\ldots,u^{(d)})=0$ for  $i=1,\ldots,d$. 

Again the core idea is to find $u\in U$ such that the operator and boundary condition residuals are minimized simultaneously:
\begin{equation}
  \label{eq:goal1eig}
\minimize_{u\in U}\left\|
  \begin{matrix}
\mathcal{L}u-\lambda u  \\
 b_1(u,\ldots,u^{(d)})    \\
\vdots\\
 b_d(u,\ldots,u^{(d)})    \\
  \end{matrix}
\right\|_2.
\end{equation}

\paragraph{Basis functions satisfying the boundary conditions}
If each of the basis functions $u_j$ satisfy the boundary conditions, that is, $b_i(u_j,u_j',\ldots)=0$ for $i=1,\ldots,d$, then~\eqref{eq:goal1eig} reduces to (writing as before the solution $u=\sum_{i=1}^nc_iu_i$)
\[
\min_{u\in U}\|\mathcal{L}u-\lambda u\|_2=
\min_{{\bf c}\in \mathbb{C}^{n}}\|(\mathcal{L}\quasi{U}) {\bf c}-\lambda \quasi{U} {\bf c}\|_2. 
\]
This is a least-squares eigenvalue problem involving quasimatrices. 

At first sight this is a difficult problem: supposing there is a solution with zero residual 
$(\mathcal{L} \quasi{U}) {\bf c} = \lambda \quasi{U} {\bf c}$, it is an $\infty\times n$ generalized eigenvalue problem. A matrix analogue would be an $m\times n$ ($m>n$) \emph{rectangular} eigenvalue problem $A {\bf x}=\lambda B {\bf x}$ for 
$A,B\in\mathbb{C}^{m\times n}$ \cite{Wright02}.
Such problems are not classically treated in numerical linear algebra, and they usually do not have solutions satisfying the equation exactly. Nonetheless, several practical methods for rectangular eigenvalue problems have been proposed, including \cite{das2013solving, Li21, Li20, Li21NLAA}. 

In this paper we focus on the algorithm by Ito and Murota~\cite{itomurota2016}, which is easy to implement and finds solutions that are optimal in a certain sense (it finds the smallest perturbations of $A,B$ such that the equation has $n$ solutions). 
The Ito-Murota algorithm relies on an SVD to reduce the problem to a square $n\times n$ generalized eigenvalue problem. We review the algorithm in Section~\ref{sec:IM}. 

Fortunately, this algorithm can be generalized readily to quasimatrices, and then in turn to quasimatrix-matrix objects. The crucial tool is the SVD of such objects, which we discuss in Section~\ref{sec:svd}. 

\paragraph{General basis functions}
When the basis functions $u_j$ do not satisfy the boundary conditions $b_i(u_j,u_j',\ldots)\neq 0$, the goal is to find the minimizer for 
\begin{equation}
  \label{eq:goal2eig}
\min_{{\bf c}\in \mathbb{C}^n}\left\|
  \begin{matrix}
(\mathcal{L}\quasi{U}){\bf c} - \lambda \quasi{U} {\bf c}  \\
\footnotesize b_1(\quasi{U}{\bf c},(\quasi{U} {\bf c})',\ldots,(\quasi{U} {\bf c})^{(d)})    \\
\vdots\\
 b_d(\quasi{U} {\bf c},(\quasi{U} {\bf c})',\ldots,(\quasi{U} {\bf c})^{(d)})    
\normalsize
  \end{matrix}
\right\|_2
=\min_{{\bf c}\in \mathbb{C}^n}\left\|
  \begin{matrix}
    \begin{bmatrix}
\mathcal{L} \quasi{U}\\
B      
    \end{bmatrix} {\bf c}
-\lambda 
\begin{bmatrix}
\quasi{U}  \\
0
\end{bmatrix} {\bf c}
  \end{matrix}
\right\|_2
\end{equation}
where $B\in\mathbb{C}^{d\times (n+1)}$. This is a rectangular eigenvalue problem for quasimatrix-matrix objects. We show that this can also be solved using the Ito-Murota method, via the SVD for quasimatrix-matrix objects. We now turn to computing such an SVD.

\section{SVD of objects involving quasimatrices and matrices}\label{sec:svd}
In this section we explain how to compute the SVD of different objects required in our least-squares framework for operator eigenvalue problems. We first review the simplest case of computing the SVD of a quasimatrix and then proceed with more involved objects. 

Let $\quasi{A}$ be an $\infty \times n$ quasimatrix. Following \cite{trefethen2013householder}, the SVD of $\quasi{A}$ can be  computed 
in two steps. We first compute the QR decomposition $\quasi{A} = \quasi{Q_A} R_A$, where $\quasi{Q}_A$ is an $\infty \times n$ quasimatrix with orthogonal columns and $R_A$ is an $n \times n$ upper-triangular matrix. We then proceed with computing the SVD of 
\begin{equation}
\label{eq:svdRfactor}
R_A = U_R \Sigma V^\ast
\end{equation}
 resulting in the SVD
\[
\quasi{A} = \underbrace{(\quasi{Q}_A U_R)}_{\quasi{U}} \Sigma V^\ast,
 \]
where $\quasi{U}$ is an $\infty \times n$ quasimatrix with orthogonal columns, $\Sigma$ is $n \times n$ diagonal with singular values arranged in non-increasing order and $V$ is an $n \times n$ unitary matrix. 

We will also need the SVD of the horizontal concatenation of two quasimatrices. Let $[\quasi{A}\  \quasi{B}]$ be an $\infty \times 2n$ quasimatrix. Analogously to the previous case, we first compute the QR decomposition $[\quasi{A} \ \quasi{B}]= \quasi{Q} R$ where $\quasi{Q}$ is an $\infty \times 2n$ quasimatrix with orthogonal columns and then compute the SVD of the $2n \times 2n$ upper-triangular matrix $R = U_R \Sigma V^\ast$. Partitioning we have
\[
U_R =
  [\underbrace{U_{R1}}_{2n \times n}\ \underbrace{U_{R2}}_{2n \times n}], \ \
\Sigma = 
\begin{bmatrix}
\Sigma_{1:n}&\\
& \Sigma_{n+1:2n}  
\end{bmatrix}, \ \
V = 
\begin{bmatrix}
V_{11}& V_{12}\\
V_{21}& V_{22}
\end{bmatrix}.
\]
Defining $\quasi{U}_1 = \quasi{Q} U_{R1}$ and $\quasi{U}_2 = \quasi{Q} U_{R2}$, note that these  $\infty \times n$ qusimatrices have orthogonal columns.
Thus we obtain the SVD
\begin{equation} 
\label{eq:svdQuasiAB}
\begin{bmatrix}
\quasi{A} & \quasi{B}
\end{bmatrix}
= \quasi{U} \Sigma V^{\ast} = 
\underbrace{\begin{bmatrix}
\quasi{U}_1 & \quasi{U}_2
\end{bmatrix}}_{\quasi{U}}
\begin{bmatrix}
\Sigma_{1:n}&\\
& \Sigma_{n+1:2n}  
\end{bmatrix} 
\begin{bmatrix}
V_{11}^\ast& V_{21}^\ast\\
V_{12}^\ast& V_{22}^\ast
\end{bmatrix},
\end{equation}

Next, we discuss the computation of the SVD of a quasimatrix-matrix object which is the vertical concatenation of an $\infty \times n$ qusimatrix $\quasi{A}$ and a $d \times n$ matrix $C$. Starting from two QR factorizations we have
\[
\begin{bmatrix}
  \quasi{A}\\C
\end{bmatrix} = \begin{bmatrix}
  \quasi{Q}_A R_A\\Q_C R_C
\end{bmatrix}  = \begin{bmatrix}
\quasi{Q}_A & 0 \\ 0 & Q_C
\end{bmatrix} \begin{bmatrix}
R_A \\ R_C
\end{bmatrix}.
\]
We then compute the thin SVD of the $(n+d) \times n$ matrix
\begin{equation}
\begin{bmatrix}
\label{svd_R_concat:eq}
  R_A\\R_C
\end{bmatrix} = U_R \Sigma V^\ast, 
\end{equation}
resulting in the SVD
\begin{equation}
\label{eq:svdRfactorquasimat-mat}
\begin{bmatrix}
  \quasi{A}\\C
\end{bmatrix} =
\underbrace{\left(\begin{bmatrix}
  \quasi{Q}_A& 0 \\ 0 & Q_C
\end{bmatrix} 
U_R\right)}_{\quasi{U}} \Sigma V^*.
\end{equation}
Here, $\quasi{U}$ is an $(\infty + d) \times n$ quasimatrix-matrix. It can be easily checked that $\quasi{U}^* \quasi{U} = I_{n}$ and $V^*V =I_n$.

Finally, consider objects of the form 
$\big[
\begin{smallmatrix}
  \quasi{A} & \quasi{B}\\
  C & D  
\end{smallmatrix}
\big]$, 
where $\quasi{A}$ and $\quasi{B}$ are $\infty \times n$ quasimatrices and $C, D$ are discrete matrices of the size $d \times n$. We first compute the two QR decompositions 
\[
\underbrace{
\begin{bmatrix}
\quasi{A} & \quasi{B}
\end{bmatrix}}_{\infty \times 2n}
= \underbrace{\quasi{Q}}_{\infty \times 2n} \underbrace{R}_{2n \times 2n}, \qquad 
\underbrace{\begin{bmatrix}
C & D
\end{bmatrix}}_{d \times 2n} = \underbrace{\tilde Q}_{d \times d} \underbrace{\tilde R}_{d \times 2n},
\]
and then compute the matrix thin SVD
\[
\underbrace{\begin{bmatrix}
  R\\ \tilde R
\end{bmatrix}}_{(2n + d) \times 2n} = \underbrace{U_R}_{(2n + d) \times 2n} \underbrace{\Sigma}_{2n \times 2n} \underbrace{V^\ast}_{2n \times 2n}.
\]
Thus we obtain the following SVD:
\begin{equation}
  \label{eq:svdquasi}  
\begin{bmatrix}
  \quasi{A} & \quasi{B}\\
  C & D
\end{bmatrix} = 
\begin{bmatrix}
  \quasi{Q}& \quasi{0}_{\infty \times d} \\ 0_{d \times 2n} & \tilde Q
\end{bmatrix} 
\begin{bmatrix}
  R\\ \tilde R
\end{bmatrix} 
=\underbrace{\left(\begin{bmatrix}
  \quasi{Q}& \quasi{0} \\ 0 & \tilde Q
\end{bmatrix} 
U_R
\right)}_{(\infty + d) \times 2n}
\Sigma V^*.
\end{equation}

As we can see, these computations crucially rely on the QR factorization of quasimatrices, which is conveniently available in Chebfun. Computing the SVD of these objects then becomes relatively straightforward and efficient.

\section{Least-squares rectangular eigenvalue solver}\label{sec:LSeig}
Having discussed how to compute the SVD of quasimatrix-matrix objects, we are now in a position to introduce our algorithm LSeig for ODE eigenvalue problems, based on solving~\eqref{eq:goal2eig}. 
As discussed above, the key is to solve a quasimatrix rectangular eigenvalue problem. We start by reviewing the Ito-Murota algorithm for the discrete case.

\subsection{Discrete case: Ito-Murota algorithm}\label{sec:IM}
Consider an $m\times n$ ($m>n$) rectangular eigenvalue problem 
\[
A {\bf x} = \lambda B {\bf x}.
\]
As noted previously, there is usually no exact solution. Furthermore, the number of local minima for $h(\sigma) = \sigma_{\min}(A-\lambda B)$ is unknown. To tackle these difficulties, Ito and Murota adopt a perturbation approach via total least-squares:
minimize $\|[\Delta A\ \Delta B]\|_F$ such that $n$ exact solutions exist for the perturbed problem: 
\begin{equation}
  \label{eq:impert}
(A+\Delta A)X = (B+\Delta B)X
\Bigg[\begin{smallmatrix}
  \lambda_1 &&\\&\ddots &\\ && \lambda_n  
\end{smallmatrix}\Bigg],  
\end{equation}
and the vectors $x_i$ are linearly independent. They show that this can be solved as follows: Take the SVD
\[
\begin{bmatrix}
A & B
\end{bmatrix}
= U\Sigma V^* = 
\begin{bmatrix}
U_1 & U_2
\end{bmatrix}
\begin{bmatrix}
\Sigma_{1:n}&\\& \Sigma_{n+1:2n}  
\end{bmatrix}
\begin{bmatrix}
V_{11}^*& V_{21}^*\\
V_{12}^*& V_{22}^*
\end{bmatrix}, 
\]
then solve the square generalized eigenvalue problem 
$ V_{11}^*X = V_{21}^*X\Lambda$ by the standard QZ algorithm. 
With these $X$ and $\Lambda=\mbox{diag}(\lambda_1,\ldots,\lambda_n)$, we have~\eqref{eq:impert} 
with  $\|[\Delta A\ \Delta B]\|_F = \sqrt{\sum_{i=1}^n(\sigma_{n+i}([A\ B]))^2}$.
 As explained by Ito and Murota~\cite{itomurota2016}, their  algorithm is equivalent to solving $(U_1^*A)X=(U_1^*B)X\Lambda$, that is, it is a projection onto the subspace spanned by the $n$ leading left singular vectors of the two matrices $[A\ \ B]$, and hence the residuals $A {\bf x}-\lambda B {\bf x}$ of the outputs are orthogonal to $U_1$.

\subsection{Continuous case}\label{cont_case:sec}
We rewrite $\quasi{A} =
\begin{bmatrix}
\mathcal{L}\quasi{U}\\B        
\end{bmatrix}
$ and $\quasi{B}=\begin{bmatrix}
\quasi{U}  \\
0
\end{bmatrix}$ in~\eqref{eq:goal2eig} and consider the quasimatrix-matrix rectangular generalized eigenvalue problem 
\[
\quasi{A} {\bf x} = \lambda \quasi{B} {\bf x}, \quad \quasi{A},\quasi{B}\in\mathbb{C}^{(\infty+d)\times n}. 
\]
As in Ito-Murota, we attempt to 
solve for the minimal $\|[\Delta \quasi{A}\ \Delta \quasi{B}]\|_F$ such that 
\begin{equation}  \label{eq:IMquasi}
(\quasi{A}+\Delta \quasi{A})X = (\quasi{B}+\Delta \quasi{B})X
\Bigg[\begin{smallmatrix}
  \lambda_1 &&\\&\ddots &\\ && \lambda_n  
\end{smallmatrix}\Bigg].   
\end{equation}
The direct analogue of Ito-Murota is to use the SVD (for $[\quasi{A}\ \quasi{B}]=\big[
\begin{smallmatrix}
\mbox{quasimatrix} & \mbox{quasimatrix}\\
\mbox{matrix} & \mbox{matrix}
\end{smallmatrix}
\big]$, as described in~\eqnref{eq:svdquasi})
\[[\quasi{A}\ \quasi{B}]=\quasi{U}\Sigma V^* = 
[\quasi{U}_1\ \quasi{U}_2]
\begin{bmatrix}
\Sigma_{1:n}&\\& \Sigma_{n+1:2n}  
\end{bmatrix}
\begin{bmatrix}
V_{11}^*& V_{21}^*\\
V_{12}^*& V_{22}^*
\end{bmatrix}, 
\]
where $\quasi{U}_1$ is a quasimatrix-matrix of size $(\infty + d) \times n$, and then reduce the problem to  the fully discrete, $n\times n$ generalized eigenvalue problem
\[
(\quasi{U}_1^*\quasi{A})X=(\quasi{U}_1^*\quasi{B})X\Lambda. 
\]
In the supplementary materials we show that this actually minimizes $\|\Delta \quasi{A}\ \Delta \quasi{B}\|_F$. 

This completes the algorithm for solving~\eqref{eq:diffeq}. An extension to the generalized problem~\eqref{eq:diffeq:gep} is straightforward: all we need is to allow for the second operator $\mathcal{L}_B$ to act on the basis. 

Finally, as mentioned in the introduction, our approach can be adapted easily to boundary conditions that depend affinely on $\lambda$. To do this we 
represent the boundary conditions as follows: 
find  $B_A,B_B\in\mathbb{C}^{d\times n}$ such that $b_i(\quasi{U}{\bf c},(\quasi{U} {\bf c})',\ldots,(\quasi{U} {\bf c})^{(d)},\lambda) =0$ is equivalent to $(B_A-\lambda B_B)[c_1,\ldots,c_n]^T=0$, where $u=\sum c_i u_i$. Putting these together, for the general problem~\eqref{eq:diffeq:gep} with $\lambda$-dependent boundary conditions 
 we solve instead of \eqref{eq:goal2eig} 
\[
\minimize_{{\bf c}\in \mathbb{C}^n}\left\|
  \begin{matrix}
    \begin{bmatrix}
\mathcal{L}_A \quasi{U}\\
B_A
    \end{bmatrix} {\bf c}
-\lambda 
\begin{bmatrix}
\mathcal{L}_B \quasi{U}  \\
B_B
\end{bmatrix} {\bf c}
  \end{matrix}
\right\|_2. 
\]

We now summarize the algorithm. 
\begin{algorithm}[h!]
\begin{algorithmic}[1]
\STATE Form quasimatrices $\quasi{A} = \mathcal{L}_A \quasi{U}, \quasi{B} = \mathcal{L}_B \quasi{U}$. 
\STATE Represent boundary conditions as $(B_A-\lambda B_B) {\bf c} = 0$, where $u = \quasi{U} {\bf c}$. 
\STATE Compute SVD of $(\infty+d)\times 2n$ quasimatrix-matrix (as in~\eqref{eq:svdquasi})
\begin{equation}
  \label{eq:svdalg}
  \begin{bmatrix}
    \quasi{A}& \quasi{B}\\
B_A& B_B
  \end{bmatrix} = 
[\quasi{U}_1\ \quasi{U}_2]
\begin{bmatrix}
  \Sigma_1 &\\
& \Sigma_2
\end{bmatrix}V.
\end{equation}
\STATE Solve the $n\times n$ discrete eigenvalue problem
\begin{equation}
  \label{eq:lseigeig}
\left(\quasi{U}_1^*
\begin{bmatrix}
  \quasi{A}\\B_A
\end{bmatrix}\right)X = 
\left(\quasi{U}_1^*
\begin{bmatrix}
  \quasi{B}\\B_B
\end{bmatrix}\right)X\Lambda.  
\end{equation}
\STATE Output $(\lambda_i,\quasi{U} {\bf c}_i)$ for $i=1,\ldots,n$ (or its subseet) as eigenpairs, 
where $X=[{\bf c}_1,\ldots, {\bf c}_n]\in\mathbb{C}^{n\times n}$ and $\Lambda = \mbox{diag}(\lambda_i)$.
\end{algorithmic}
\caption{LSeig: Least-squares method for  $\mathcal{L}_A u = \lambda \mathcal{L}_B u$, and boundary conditions $b_i(\quasi{U} {\bf c},(\quasi{U} {\bf c})',\ldots,(\quasi{U} {\bf c})^{(d)}) = 0$.
$\quasi{U}:=[u_1,\ldots,u_n]$ are the user-defined basis functions. 
} \label{LSeig:alg}
\end{algorithm}

Remarks:
\begin{itemize}
\item The eigenvalue problem \eqref{eq:lseigeig} gives $n$ eigenpairs, and hence LSeig outputs $n$ solutions $(\lambda_i,\quasi{U}c_i)$ for $i=1,\ldots,n$. Among these, we are often interested in a specific (e.g. largest) eigenpair. In any case, since usually only some of the computed eigenpairs are accurate, we recommend examining the residuals $\|\mathcal{L}_A\quasi{U}c_i  - \lambda_i \mathcal{L}_B \quasi{U}c_i\|_2$ and accepting those that are small enough; see~\eqref{relres_quasi:eq}.
\item No condition is imposed on the basis functions in $\quasi{U}$; they can be chosen arbitrarily. Hence, standard choices such as Chebyshev or Fourier (for periodic problems) are applicable, but in addition, other basis functions are equally applicable. We explore nonstandard choices in our forthcoming experiments. 

\item As with LSode, LSeig has a flavor of both collocation and Galerkin: the residual $\begin{bmatrix}  \mathcal{L}_Au-\lambda\mathcal{L}_Bu  \\ (B_A-\lambda B_B) {\bf c}\end{bmatrix}$ is orthogonal to $\quasi{U}_1$.

\item The dominant computational cost is in computing the SVD and solving the generalized eigenvalue problem. 
Computing the SVD in Chebfun requires roughly $O(kn^2)$ operations, where $k$ is the maximum degree of $u_i$, and the generalized eigenproblem requires  $O(n^3)$ operations, with a total cost of $O((n+k)n^2)$. 

\item We can obtain straightforward variants by introducing weighting to specific rows of~\eqref{eq:svdalg} to give more weight to e.g. the boundary conditions (if enforcing them tightly is important), and scaling the columns (corresponding to diagonal scaling, as employed in the MATLAB {\tt eig} command).

\end{itemize}

\subsection{Pseudospectra of quasimatrices}
Pseudospectra provide insights into what LSeig does. Let $\quasi{A}$ and $\quasi{B}$ be $\infty\times n$ quasimatrices. We define the spectrum of the pair $(\quasi{A}, \quasi{B})$ as follows:
\begin{equation} \label{eigQuasi:eq}
\Lambda(\quasi{A}, \quasi{B}) := \{ \lambda \in \Cz: \exists 0 \neq {\bf v}\in\Cz^{n},\ \forall x \in [-1, 1],\ \quasi{A}(x) {\bf v} = \lambda \quasi{B}(x) {\bf v}  \}.
\end{equation}

We mentioned previously that rectangular matrix pencils usually do not have eigenvalues. Analogously, the spectrum of the quasimatrix pair $(\quasi{A}, \quasi{B})$ is often empty. We nonetheless wish to discuss points that are nearly in the spectrum. 

A nice tool for analyzing the behavior of the spectrum of {\em matrices} and {\em operators}~\cite{davies2000pseudospectra} under small perturbations is the pseudospectrum; see~\cite{trefethenEmbree} and~\cite{Wright02} for a detailed discussion of the various aspects of pseudospectra in those situations. This notion can be extended readily to the case of quasimatrices. Let $\epsilon>0$ be arbitrary and $\sigma_{\min}$ denote the smallest singular value. We call
\begin{equation} \label{pseudoQuasi:eq}
\Lambda_{\epsilon}(\quasi{A}, \quasi{B}) = \bigg\{ z \in \Cz: \frac{\sigma_{\min}(z\quasi{B} - \quasi{A})}{\sqrt{1 + |z|^2}} < \epsilon \bigg\},
\end{equation}
the $\epsilon$-pseudospectrum of the quasimatrix pair $(\quasi{A}, \quasi{B})$, see~\cite{boutry05, das2013solving}. Obviously $\Lambda(\quasi{A}, \quasi{B}) \subseteq \Lambda_{\epsilon}(\quasi{A}, \quasi{B})$, for every $\epsilon>0$. 
More generally, $\Lambda_{\hat\epsilon}(\quasi{A}, \quasi{B}) \subseteq \Lambda_{\epsilon}(\quasi{A}, \quasi{B})$ if $\hat\epsilon<\epsilon$. By minimizing the perturbation so that solutions exist as in~\eqref{eq:IMquasi}, the Ito-Murota algorithm can be seen as a method for finding a set of points in the $\epsilon$-pseudospectrum of $(\quasi{A}, \quasi{B})$, for small values of $\epsilon$.

\begin{example}\label{chebMonde:ex} \normalfont
Let $\quasi{A}(x) = [T_0(x), T_1(x), \cdots , T_5(x)]$ and $\quasi{B}(x) = [P_0(x), P_1(x),\cdots,\allowbreak P_5(x)]$ defined on $[-1,1]$, where $P_i$ denotes the $i$-th Legendre polynomial\footnote{In Chebfun \cite{Chebfun14} these $\infty \times 6$ quasimatrices can be readily constructed with the commands {\tt A = chebpoly(0:5); B = legpoly(0:5);}}. It can be verified that $\Lambda(\quasi{A}, \quasi{B}) = \{1, 1, \frac{4}{3}, \frac{8}{5}, \frac{64}{35}, \frac{128}{63}\}$ and that for example ${\bf v}_3 = [\frac{-1}{\sqrt{2}}, 0, \frac{1}{\sqrt{2}}, 0, 0, 0]^T$ is the eigenvector corresponding to $\lambda_3 = \frac{4}{3}$. Figure~\ref{fig:basicDef2} illustrates $\Lambda_{\epsilon} (\quasi{A}, \quasi{B})$ on a grid of $z$-values. 
\begin{figure}[!h]
\centering
\includegraphics[width=80mm]{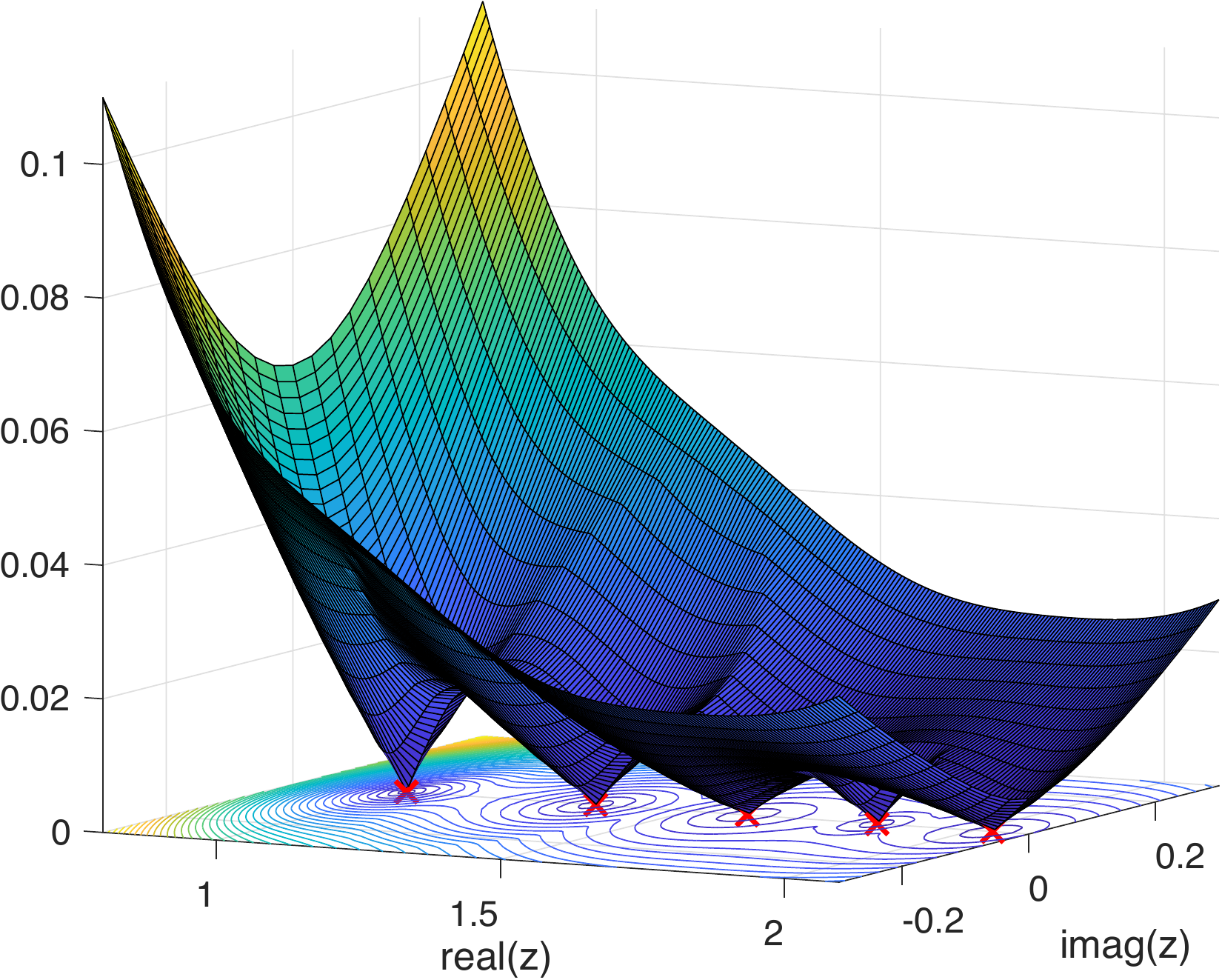}
\caption{Graphical inspection of the spectrum and the pseudospectra of the $\infty \times 6$ quasimatrix pencil of Example~\ref{chebMonde:ex}. Red crosses denote the eigenvalues.} \label{fig:basicDef2}
\end{figure}

\end{example}

\section{Boundary conditions}\label{sec:bc}
Here we discuss various aspects of boundary conditions. We discuss a variant of our algorithm where the boundary conditions are imposed exactly (instead of being minimized together with the residual).  We then discuss boundary conditions that arise specifically in eigenvalue problems, namely those that depend on $\lambda$. 

\subsection{Imposing boundary conditions exactly}\label{sec:bcexact}
We can modify Algorithm~\ref{LSeig:alg} to impose boundary conditions exactly by restricting the projection space to contain the parts representing the boundary conditions. 
Conceptually,  this is done by introducing an infinite weight in the final $d$ rows of~\eqref{eq:svdalg}. This forces the leading $d$ left singular vectors to span $
\begin{bmatrix}
  0_{\infty\times d}\\ I_d
\end{bmatrix}$. We then choose the remaining ($n-d$)-dimensional part by the SVD of the remaining part, that is, the quasimatrix $[\quasi{A}\ \quasi{B}]$. 

Namely, steps 3,4 in Algorithm~\ref{LSeig:alg} become
\begin{itemize}
\item Compute the SVD of the $\infty \times 2n$ quasimatrix
\[
  \begin{bmatrix}
    \quasi{A}& \quasi{B}
  \end{bmatrix} = 
[\quasi{U}_1\ \quasi{U}_2]
\begin{bmatrix}
  \Sigma_1 &\\
& \Sigma_2
\end{bmatrix}V
,\qquad \quasi{U}_1\in\mathbb{C}^{\infty\times (n-d)}.\]
\item Set $\quasi{U}:=
  \begin{bmatrix}
\quasi{U}_1& 0_{\infty\times d}    \\
0_{d\times (n-d)}& I_d 
  \end{bmatrix}$
and solve the $n\times n$ square  eigenvalue problem
\[
\left(\quasi{U}^*
\begin{bmatrix}
  \quasi{A}\\B_A
\end{bmatrix}\right)X = 
\left(\quasi{U}^*
\begin{bmatrix}
  \quasi{B}\\B_B
\end{bmatrix}\right)X\Lambda. 
\]
\end{itemize}
We refer to this algorithm as LSeig-bc.

For ODEs, we can similarly modify Algorithm~\ref{LSode:alg} as follows: 
take the leading $n-d$ left singular functions of $\mathcal{L} \quasi{U}$, call it $\quasi{U}_1$, define $\quasi{Q} =
\begin{bmatrix}
  \quasi{U}_1&0  \\ 0&I_d
\end{bmatrix}$, and find $c$ by imposing $\quasi{Q}^*\left(
\begin{bmatrix}
\mathcal{L}U\\ B   
\end{bmatrix} {\bf c}
-
\begin{bmatrix}
f\\ {\bf f_b}
\end{bmatrix}\right)=0$.

\subsection{$\mathbf{\lambda}$-dependent boundary conditions}\label{sec:lambc}
ODE eigenvalue problems involving boundary conditions that depend on the 
unknown spectral parameter $\lambda$ are an interesting class of non-standard eigenvalue problems.\ Such problems arise, in particular, in elasticity and hydrodynamics \cite{Gheorghiu05, Marletta03}. One example is the Orr--Sommerfeld equation for a liquid film flowing over an inclined plane, with a surface tension gradient which involves boundary conditions that depend linearly on $\lambda$; see \cite{Gheorghiu96, Greenberg01}. Other problems with $\lambda$-dependent boundary conditions can be found in~\cite{Marletta03}. 

In addition, the development of a Sturm theory (ordering of eigenvalues, studying oscillation of eigenfunctions, etc.) for eigenvalue problems of second-order ODEs with $\lambda$-dependent boundary conditions has been studied in \cite{Hochstadt67, binding97, Chan12}. 

Among the earliest references on nonclassical boundary conditions is Birkhoff~\cite{birkhoff08issue4, birkhoff08issue2}.\ Tamarkin \cite{tamarkin17,tamarkin28} considered problems where the differential equation and the boundary conditions depend polynomially on $\lambda$. For more historical notes on $\lambda$-dependent boundary conditions, see \cite{Tretter01}. 

From the algorithmic point of view it is known that imposing boundary conditions that depend on $\lambda$ is challenging for spectral collocation methods and the tau method is considered the method of choice; see \cite[p. 112]{Gheorghiu05}. As seen above, it is straightforward for our least-squares framework to deal with boundary conditions that depend affinely\footnote{It is however not straightforward with boundary conditions that depend \emph{polynomially} on $\lambda$. We leave this for future work. } on $\lambda$. Other techniques include the regularized sampling method with which we compare the performance of our framework in Example~\ref{Chanane:ex}.

\section{Integral equation reformulation}\label{sec:integral}
The differentiation operator is unbounded, while integration is compact.\ This roughly implies that differentiation is ill-conditioned whereas integration is a well conditioned operation.\ Thus, it is often a good idea to reformulate differential equations as integral equations, whenever possible~\cite{greengard1991spectral}, \cite{driscoll2010automatic}.

 Here let us consider the advection-diffusion eigenvalue problem on the interval $[a,b]$ 
\[
u''+u'=\lambda u,\quad u(a)=u(b)=0. 
\]
In the integral reformulation we take $v(x) := u''(x)$ as the unknown. Then, $u'(x) =\alpha + \int_a^x v(\eta)\ d\eta$, $u(x) = \alpha x+\beta + \int_a^x \int_a^t v(\eta)\ d\eta\ dt$ and the problem becomes 
\[
v(x) + \alpha + \int_a^x v(\eta)\ d\eta=
\lambda \left( \alpha x+\beta + \int_a^x\int_a^t v(\eta)\ d\eta\ dt \right),
\]
with boundary conditions 
\begin{equation}  \label{eq:bcint}
\alpha a+\beta=0,\quad 
 \alpha b+\beta + \int_a^b\int_a^t v(\eta)\ d\eta\ dt=0.   
\end{equation}
We can formulate these in a least-squares eigenvalue problem. 
To do so we write $v(x)=\sum_{i=0}^{n-1} c_i T_i(x)$, and on the left-hand side 
we take the basis to be $[T_0,\dots, T_{n-1},1,\allowbreak 0]$ (note that this quasimatrix is rank-deficient). 
The right-hand side has basis $[T_0, \dots,\allowbreak T_{n-1}, x, 1]$ and the eigenvector is $[c_0,c_1\dots,c_{n-1},\alpha,\beta]$. We used monomials to represent constant terms resulting from each integration but, as in Example~\ref{OS:ex}, the Chebyshev basis could also be used. To summarize, LSeig takes the two input quasimatrices $\quasi{A} = [\mathcal{I}_A \quasi{Q},\ 1,\ 0]$ and $\quasi{B} = [\mathcal{I}_B\quasi{Q},\ x,\ 1]$ where $\mathcal{I}_A v(x) := v(x) +  \int_{a}^x v(\eta)\ d\eta$ and $\mathcal{I}_B v(x) :=  \int_a^x \int_a^t v(\eta)\ d\eta\ dt$, together with the boundary conditions~\eqref{eq:bcint}. The rest is the same as in Algorithm~\ref{LSeig:alg}. We illustrate this idea for the Orr-Sommerfeld problem in Example~\ref{OS:ex}.

\section{Experiments}\label{sec:exp}
This section reports the performance of our least-squares framework for various problems, focusing on the flexibility of our approach and the accuracy of the computed eigenpairs. All numerical experiments were done using Chebfun on a standard laptop. Inputs to our LSeig and LSeig-bc algorithms include quasimatrices $\quasi{A}$ and $\quasi{B}$ together with matrices representing boundary conditions. Another input is the tolerance \texttt{tol} on the following relative residual ensuring that we only output eigenpairs satisfying
\begin{equation}
\label{relres_quasi:eq}
\frac{\|\quasi{A} V - \quasi{B} V \Lambda \|_2}{\|\quasi{A} V\|_2} < \mbox{ \texttt{tol}.}
\end{equation}
We report \texttt{tol} in each experiment. When we report residuals with LSeig we report the augmented residual that includes the residual of the boundary conditions. In most of our experiments, LSeig-bc tended to give slightly more accurate results.

\begin{example}\label{inviter}
\normalfont
\emph{Tailor-made basis functions.} 
The flexibility of our approach is advantageous especially when the solutions are complicated (e.g.\ nonsmooth), so that classical approaches based on global polynomials are inefficient.\ When the solution is nonsmooth, for example oscillatory or has singularities (e.g. of absolute-value type), it is much desirable to incorporate tailor-made basis functions. 

To illustrate this, we consider the Airy eigenvalue problem 
\begin{equation}  \label{eq:airyeig}
\mathcal{L}u: = \epsilon u'' + xu = \lambda u, \qquad 
u(\pm 1) = 0.  
\end{equation}
Here we target the smallest eigenpair. 
When $|\epsilon|\ll 1$, the eigenfunctions are highly oscillatory (see right panel of Figure~\ref{fig:inviter}), and representing them accurately with polynomials would require a very high degree, making the computation inefficient. 

To overcome this difficulty, one can consider using subspace methods that successively build a subspace that is designed to be rich in the desired eigenspace. Examples for matrix eigenvalue problems 
include Krylov methods and their variants (shift-and-invert, rational) the Rayleigh quotient iteration, Jacobi-Davidson~\cite{jdoriginal}  and generalized Davidson method, each differing in how the subspace is computed.

Here we illustrate a subspace built via the inverse iteration: given an approximate eigenfunction $\tilde u$, we solve the ODE  
\begin{equation}  \label{eq:airylinsys}
\mathcal{L}u = \tilde u 
\end{equation}
for $u$, and update $u:=\frac{\tilde u}{\|\tilde u\|_2}$, and repeat until convergence. 
To solve the ODE~\eqref{eq:airylinsys} we used the Olver-Townsend algorithm~\cite{olver2013fast}, which is also available in Chebfun\footnote{Using the flag \verb+options.discretization = 'coeffs'+.}. 
We compare inverse iteration against LSeig using the subspace generated by the iterates $\tilde u$, that is, 
$\mbox{span}(\quasi{U})=\mbox{span}(u_0,\mathcal{L}^{-1}u_0,\mathcal{L}^{-2}u_0,\ldots,\mathcal{L}^{-(n-1)}u_0)$, the Krylov subspace with respect to the inverse $\mathcal{L}^{-1}$ as in Section~\ref{sec:LSODEex}. Here $u_0$ is the initial approximation, which we take to be $u_0(x)=(x-1)(x+1)$; unlike in Section~\ref{sec:LSODEex}, we start the subspace from $u_0$ as it satisfies the boundary conditions.

The results for $\epsilon=10^{-8}$ are shown in Figure~\ref{fig:inviter}. 
We see that (i) both inverse iteration and LSeig together with the inverse iteration Krylov subspace are effective algorithms for the problem (both are dramatically better than global polynomials, which would need $n\gg 10^3$), and (ii) LSeig outperforms inverse iteration by finding ``the best'' solution in the subspace; this difference is analogous to that between the standard power method and Lanczos method for symmetric eigenvalue problems.

\begin{figure}[htbp]
  \begin{minipage}[t]{0.495\hsize}
\includegraphics[width=0.91\textwidth]{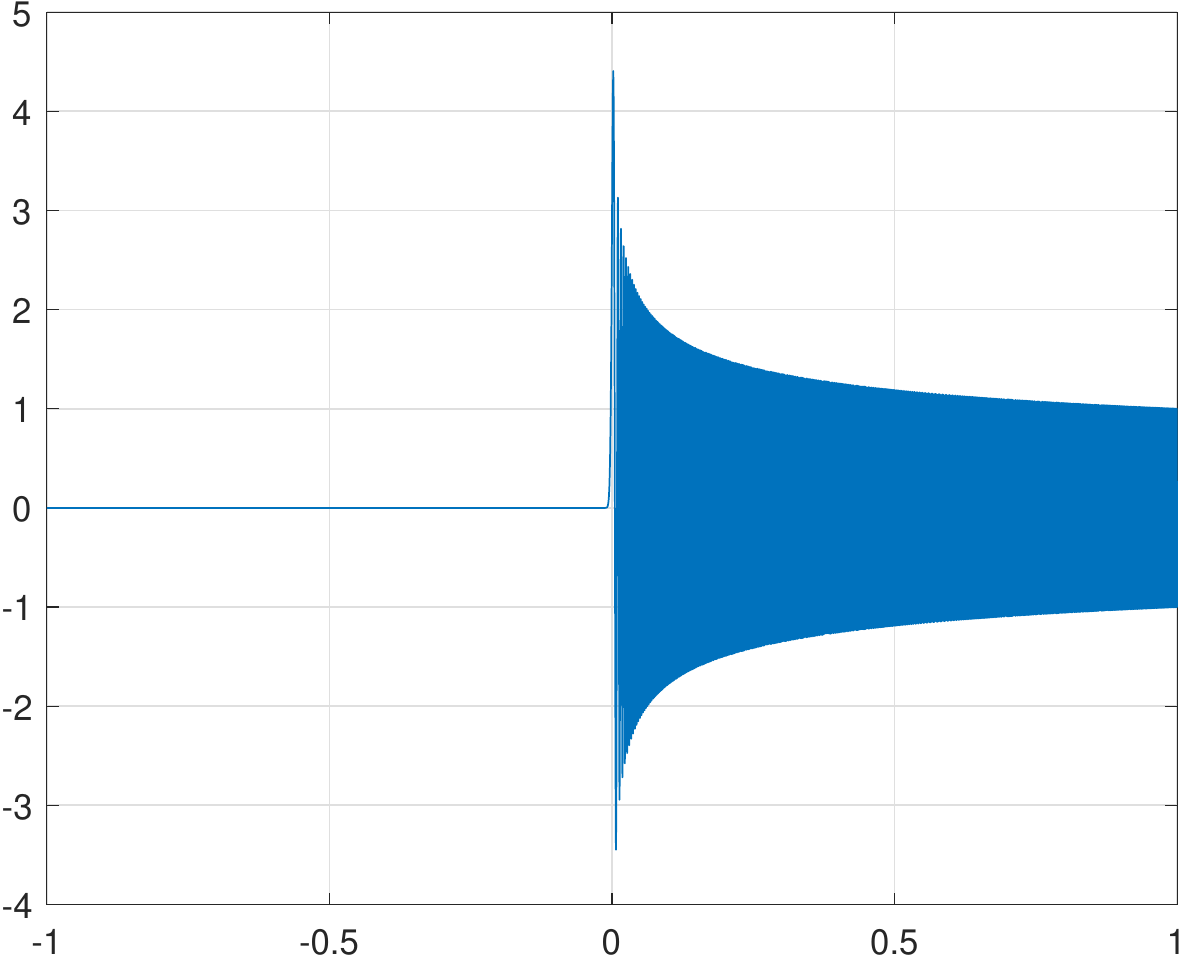}
  \end{minipage}   
  \begin{minipage}[t]{0.495\hsize}
\includegraphics[width=\textwidth]{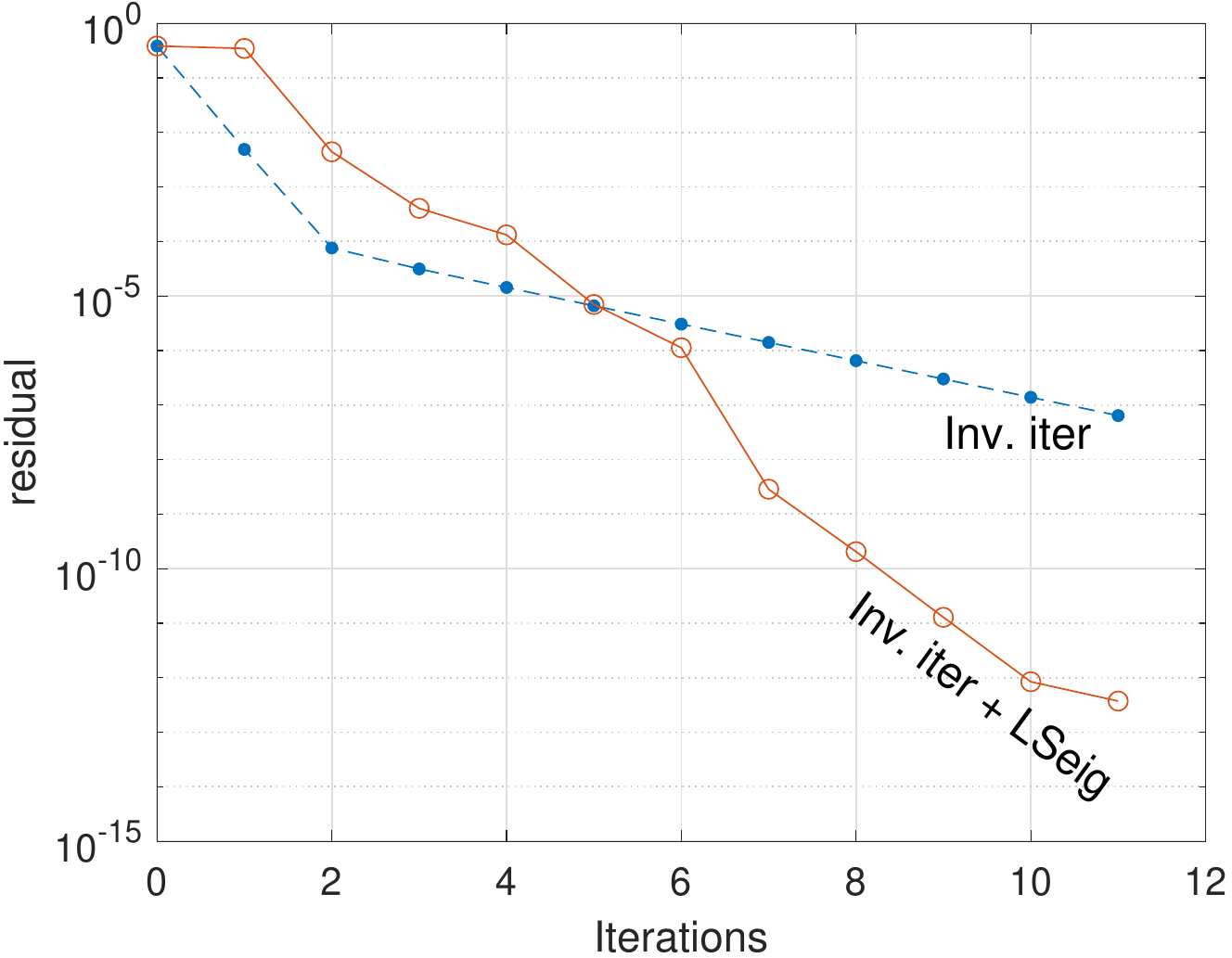}
  \end{minipage}
  \caption{
Solving the Airy eigenvalue problem \eqref{eq:airyeig} with $\epsilon=10^{-8}$, using (i) inverse iteration (Inv. iter)  
and (ii) LSeig using the subspace obtained by inverse iteration. 
Right: Solution $\hat u$. Left: Residual history $\|\mathcal{L}\hat u - \hat\lambda \hat u\|_2$. }
  \label{fig:inviter}
\end{figure}

We note that for $\epsilon<10^{-6}$, the solution becomes too oscillatory for Chebfun's command {\tt eigs} based on a collocation method to converge. 
When $\epsilon<10^{-9}$ we further observed that inverse iteration fails to converge, as the ODE~\eqref{eq:airylinsys} becomes increasingly difficult to solve. Using the same subspace, LSeig nonetheless obtained solutions with residual smaller than $10^{-10}$. LSeig-bc (imposing boundary conditions exactly) led to similar results.  Here and in the next example the \texttt{tol} value in LSeig  was selected to $1$ so that all quantities are initially output, and we chose the eigenpair with the smallest residual.
\end{example}

\begin{example}~\label{abs}
\normalfont
\emph{Problems with singularities.}
We next consider Example 20 in \cite[App.~B]{trefethen2017exploring}, a 1D Schr\"odinger equation 
\begin{equation}  \label{eq:schrodinger}
\mathcal{L}u=-h^2u'' +V(x)u = \lambda u, \qquad 
V(x)=|x|, 
\end{equation}
with $h=0.1$ and boundary conditions $u(\pm 3)=0$ (the domain is $[-3,3]$). Here the potential $V(x)$ has a singularity at $x=0$, and this forces the eigenfunctions to also have a (weaker but genuine) singularity: they are twice differentiable but not more. 

The nonsmoothness of $u$ presents difficulties for global polynomial-based methods, since as is well known, polynomials cannot approximate such functions efficiently (the convergence is $O(1/n)$ or slower, where $n$ is the degree). 

To remedy this, one can employ locally supported, piecewise polynomial basis functions as follows: 
\begin{equation}
  \label{eq:splitu}
u_{2k-1}(x) = 
\begin{cases}
T_k(2(x+\frac{1}{2})), &  x< 0\\
    0,              & x\geq 0
\end{cases}, \qquad 
u_{2k}(x) = 
\begin{cases}
0, &  x< 0\\
T_k(2(x-\frac{1}{2})),              & x\geq 0
\end{cases}  .
\end{equation}
We use LSeig with these basis functions, imposing continuity of the solution and its first and second derivatives at the splitting point $x=0$; this can be done by adding three more boundary conditions in the least-squares formulation. 
We compare the algorithm with LSeig using the standard global polynomials $u_i = T_i(x)$. The results are shown in Figure~\ref{fig:schrodinger}, which illustrates the advantages of using local piecewise polynomials when the location and nature of the singularity is known. 

\begin{figure}[htbp]
  \begin{minipage}[t]{0.495\hsize}
\includegraphics[width=0.92\textwidth]{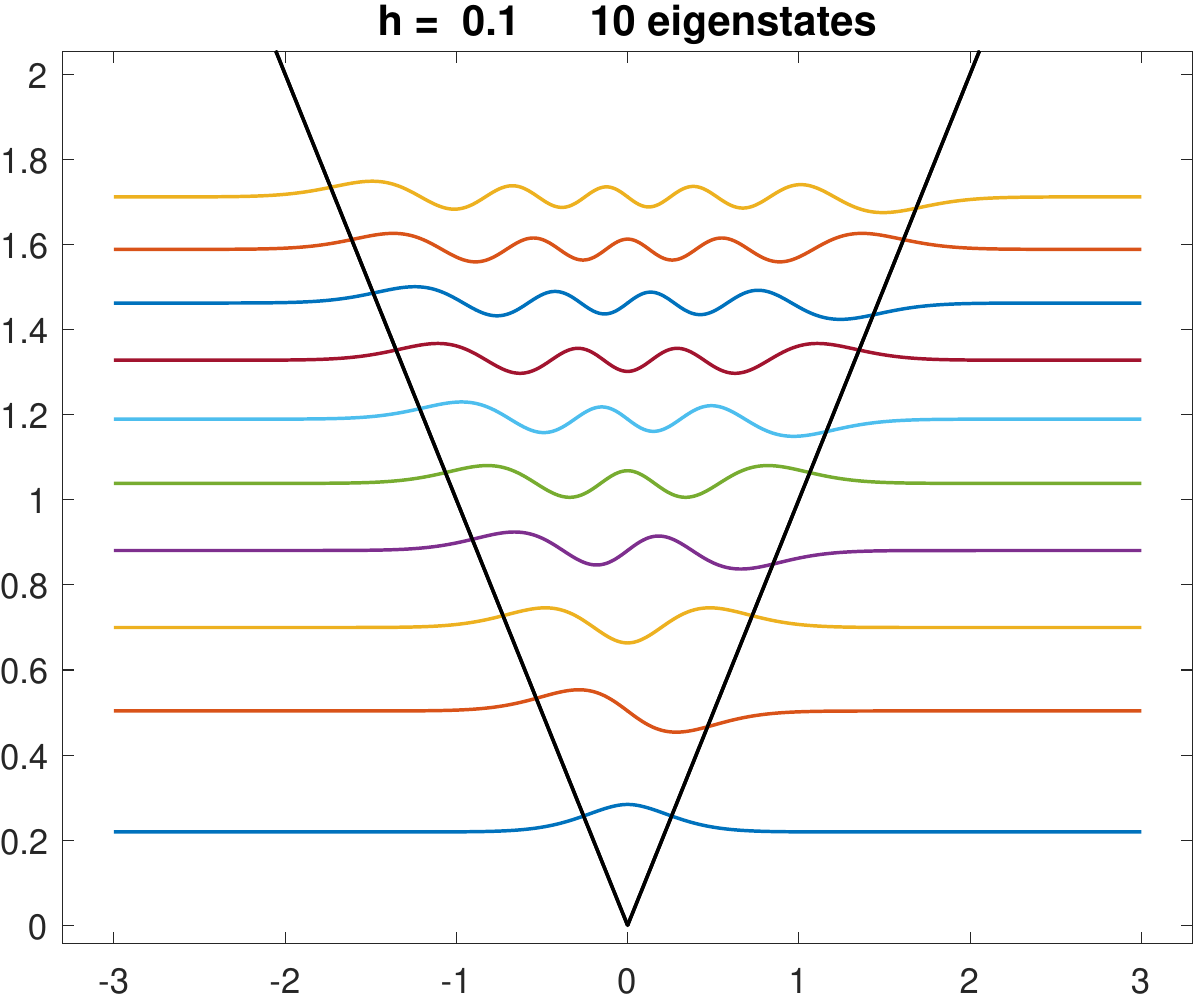}
  \end{minipage}   
  \begin{minipage}[t]{0.495\hsize}
\includegraphics[width=\textwidth]{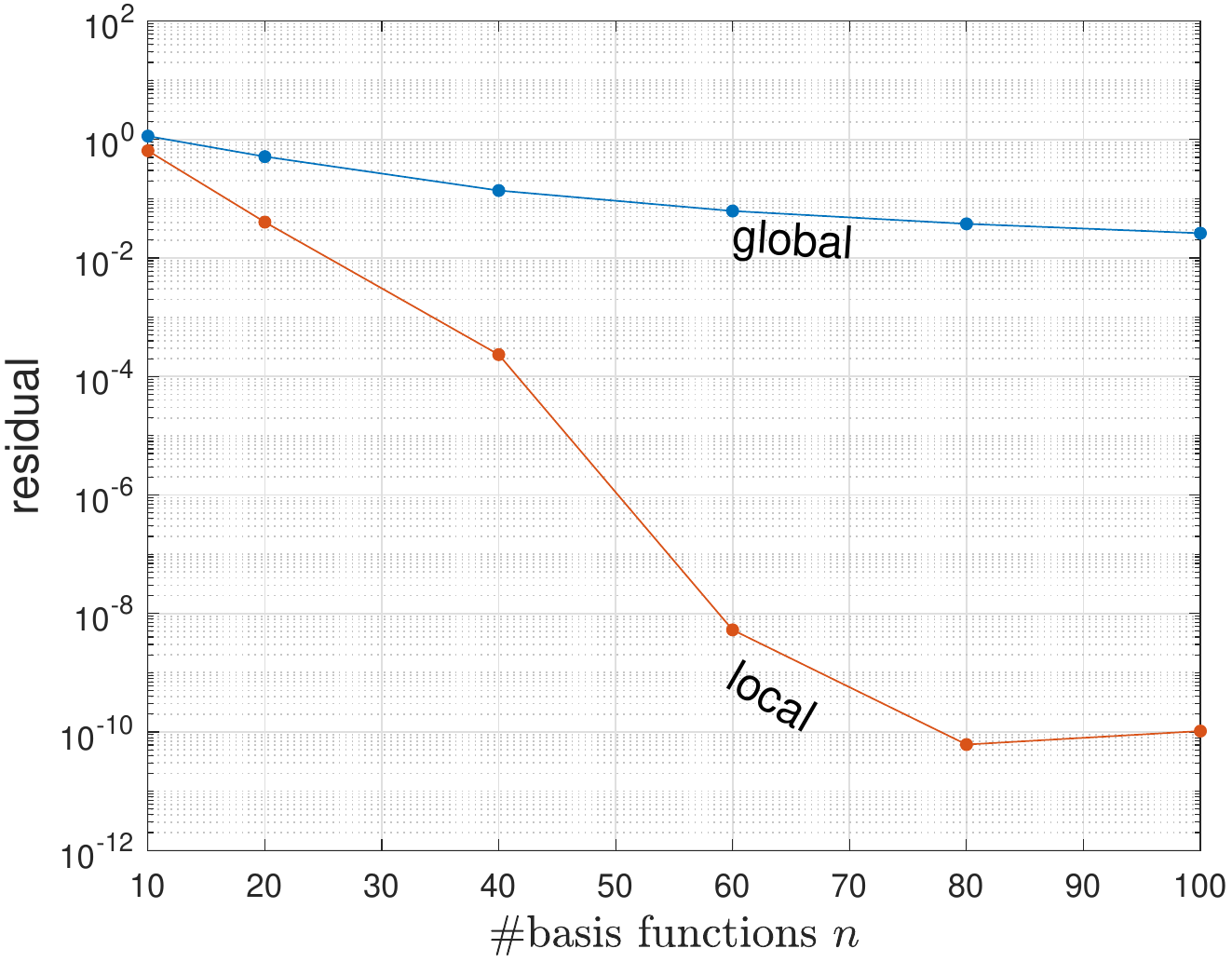}
  \end{minipage}
  \caption{Eigenfunctions (left; reprint of~\cite[App.~B, Ex.~20]{trefethen2017exploring}) and convergence to smallest eigenfunction for~\eqref{eq:schrodinger} using LSeig, with global and local~\eqref{eq:splitu} polynomial basis functions.}
  \label{fig:schrodinger}
\end{figure}
  
\end{example}

It is worth noting that Chebfun's algorithm can also deal with such problems efficiently, by detecting singularities and working with piecewise polynomials~\cite{pachon2010piecewise}.

\begin{example}\label{SL:ex}
\normalfont
\emph{Generalized eigenproblems.} Consider the Sturm-Liouville problem
\[
(e^{3x} u'(x))' + 2e^{3x} u(x) + \lambda e^{3x} u(x) = 0, \qquad u(0) = u(1) = 0.
\]
Starting from 100 Chebyshev polynomials and $\mbox{\texttt{tol}} =10^{-10}$, the LSeig method with exact boundary conditions obtains 41 eigenvalues in $0.14$ seconds. The Chebfun \texttt{eigs} command takes $0.74$ seconds when asked to compute the same number of eigenvalues\footnote{We are not claiming LSeig is a faster algorithm---Chebfun's method finds an appropriate degree adaptively, whereas in LSeig the degree is an input. The point here is that the speed of LSeig is comparable.}. Relative residuals~\eqnref{relres_quasi:eq} are depicted in Figure~\ref{fig:SLres}. We also report the average number of correct digits $d$ by looking at the logarithm of the geometric mean of the relative residuals. Note that the eigenfunctions of this Sturm-Liouville equation should be orthogonal with respect to the weight $e^{3x}$; see~\cite[ch. 13]{Trench21} for instance. The departures from orthonormality in the computed (normalized) eigenfunctions are similar: $2.1\times 10^{-8}$ for LSeig-bc and $1.8 \times 10^{-8}$ for Chebfun \texttt{eigs}. 

\begin{figure}[htbp]
\centering
\includegraphics[width=60mm]{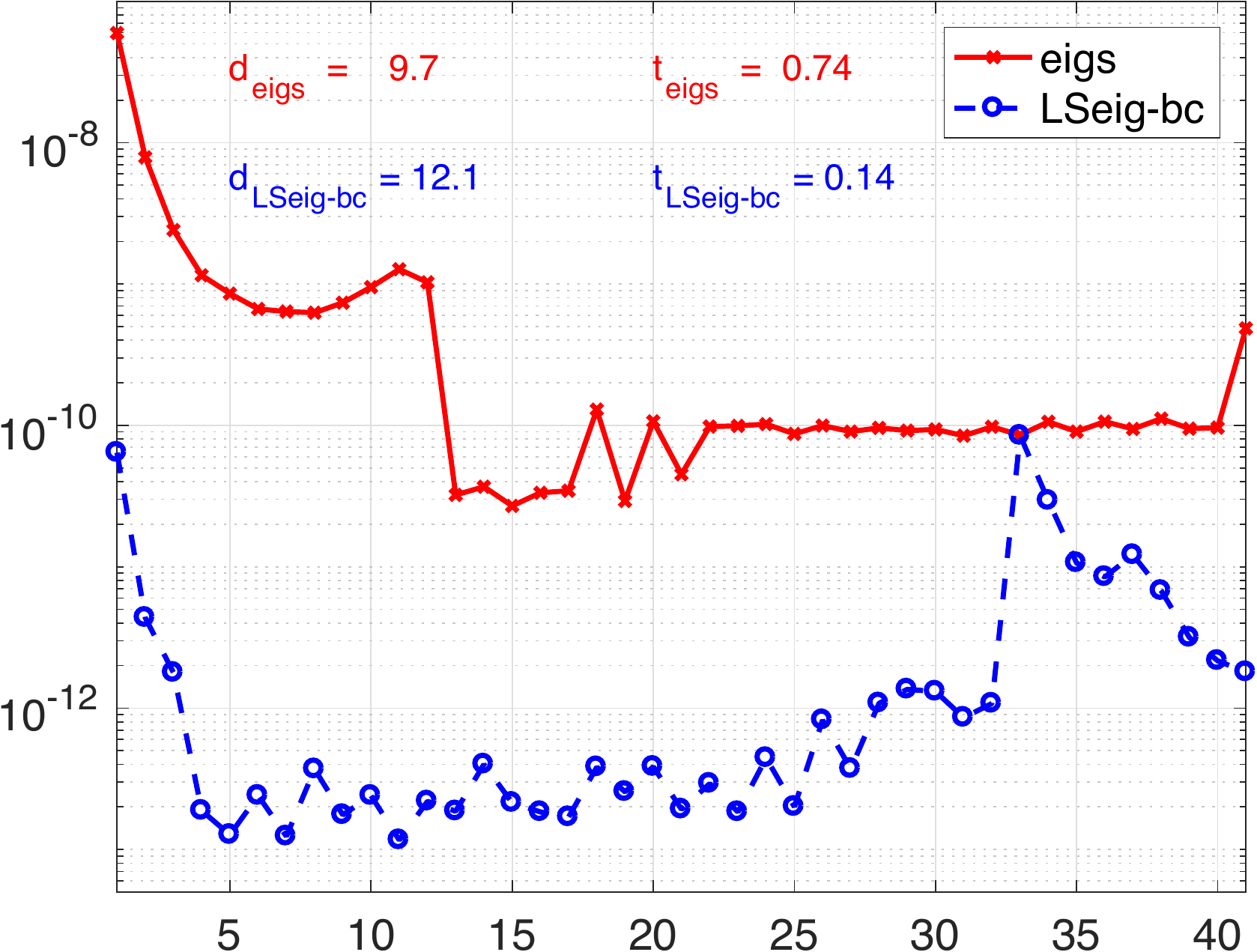}
\caption{Relative residuals~\eqnref{relres_quasi:eq} in 41 computed eigenvalues of the Sturm-Liouville problem of Example~\ref{SL:ex}.}
\label{fig:SLres}
\end{figure}

\end{example}

\begin{example}~\label{OS:ex}
\normalfont
{\em Basis recombination and integral reformulation.}
Let us consider the famous Orr-Sommerfeld equation
\begin{equation}
 \label{eq:OS}
\frac{1}{R}(u''''-2u''+u) - 2iu - i(1-x^2)(u''-u)
= \lambda(u''-u),
\end{equation}
with boundary conditions $u(\pm1) = u'(\pm1) = 0$, which arises in determining conditions for hydrodynamic stability. See \cite{schmid01} for instance. Here, $R$ denotes the Reynolds number, which corresponds roughly to velocity divided by viscosity. We take $R = 5772$, which is about the critical value at which an eigenvalue first crosses into the right half-plane, corresponding to an unstable flow.

We write~\eqnref{eq:OS} as $\mathcal{L}_A u=\lambda\mathcal{L}_B u$ and apply three  methods within our least-squares framework. First of all, we apply LSeig-bc directly to~\eqnref{eq:OS}. The second method applies LSeig with basis recombination using the following basis functions satisfying the basis functions:
\[
(1+x)^2 (1-x)^2 T_i(x), \qquad i = 0,1,\dots, n-1.
\]

We also consider an integral reformulation of~\eqnref{eq:OS} solving for $v:=u''''$, with eigenvector $[c_0,c_1\ldots,c_{n-1},a_0,a_1,a_2,a_3]$, where 
\begin{equation}
\label{solExapnd:eq}
u(x) = \int_{-1}^x \int_{-1}^w \int_{-1}^s \int_{-1}^t v(\eta)\ d\eta \ dw\ ds\ dt+ p(x),
\end{equation}
$v(x) = \sum_{i=0}^{n-1}a_i T_i(x)$, and $p(x)$ which is a polynomial of degree three resulting from integrations is represented with $\sum_{i=0}^3 a_i T_i(x)$. In addition,
\[
u''(x) = \int_{-1}^x \int_{-1}^w v(\eta)\ d\eta \ dw+ p''(x).
\]
Thus, we can write $v = \quasi{Q} c$ where $\quasi{Q} := [T_0,\dots,T_{n-1}]$ and expand $u$ in terms of the columns of the basis quasimatrix $[\quasi{Q}, T_0, T_1, T_2, T_3]$. We rewrite the Orr-Sommerfeld equation as $\mathcal{I}_A v+\mathcal{L}_A p =\lambda (\mathcal{I}_B v+\mathcal{L}_B p)$, where 
\small
\begin{align*}
\mathcal{I}_A v(x) := &\frac{1}{R} v(x) - \Big(\frac{2}{R} +i(1-x^2) \Big) \int_{-1}^x \int_{-1}^w v(\eta)\ d\eta \ dw \nonumber\\
& + \Big(\frac{1}{R} -2i + i(1-x^2) \Big)\int_{-1}^x \int_{-1}^w \int_{-1}^s \int_{-1}^t v(\eta)\ d\eta \ dw\ ds\ dt
\nonumber,
\end{align*}
\[
\mathcal{I}_B v(x) := \int_{-1}^x \int_{-1}^w v(\eta)\ d\eta \ dw - \int_{-1}^x \int_{-1}^w \int_{-1}^s \int_{-1}^t v(\eta)\ d\eta \ dw\ ds\ dt,\nonumber
\]
\normalsize 
$\mathcal{L}_A p(x) := \sum_{i=0}^3 a_i q_i(x)$ and $\mathcal{L}_B p(x):=\sum_{i=0}^3 a_i \tilde q_i(x)$. Therefore, we apply our LSeig-bc algorithm to the quasimatrices $\quasi{A}:=[\mathcal{I}_A \quasi{Q}, q_0,q_1,q_2,q_3]$ and $\quasi{B} := [\mathcal{I}_B \quasi{Q}, \tilde q_0,\tilde q_1,\tilde q_2,\tilde q_3]$ where $q_i=\mathcal{L}_AT_i$ and $\tilde q_i =\mathcal{L}_B T_i$ for $i=0,1,2,3$. Also, LSeig takes two matrices 
$B_A$ and $B_B$ of size $d \times (n+d)$ where each row represents one of the $d = 4$ boundary conditions. This means that $B_B = 0$ as boundary conditions do not depend on $\lambda$. In addition, having \eqnref{solExapnd:eq} in mind, for instance, the row of $B_A$ corresponding with the boundary condition $u(+1) = 0$ contains factors multiplied by $a_i$ and $c_i$ in the following equation:
\[
\sum_{i=0}^{n-1} a_i \int_{-1}^{1} \int_{-1}^w \int_{-1}^s \int_{-1}^t  T_i(\eta)\ d\eta \ dw\ ds\ dt + \sum_{i=0}^{3} c_i T_i(+1) = 0.
\]
Notice the upper bound $x = 1$ of the outer-most integral.

In this example, we use 100 Chebyshev polynomials and set $\mbox{\texttt{tol}} = 10^{-2}$. Figure~\ref{fig:OS_eigs} (left) depicts eigenvalues computed with our least-squares method applied to the integral reformulation in which boundary conditions are imposed exactly.  The right panel depicts the residuals corresponding with eigenvalues computed with our algorithms applied to the integral reformulation and with basis recombination. We see that LSeig-bc applied to the integral reformulation computes accurate results for a larger number of eigenvalues compared with basis recombination. See also Table~\ref{tab:OS}.

\begin{figure}[t]
  \begin{minipage}[t]{0.495\hsize}
      \includegraphics[width=.97\textwidth]{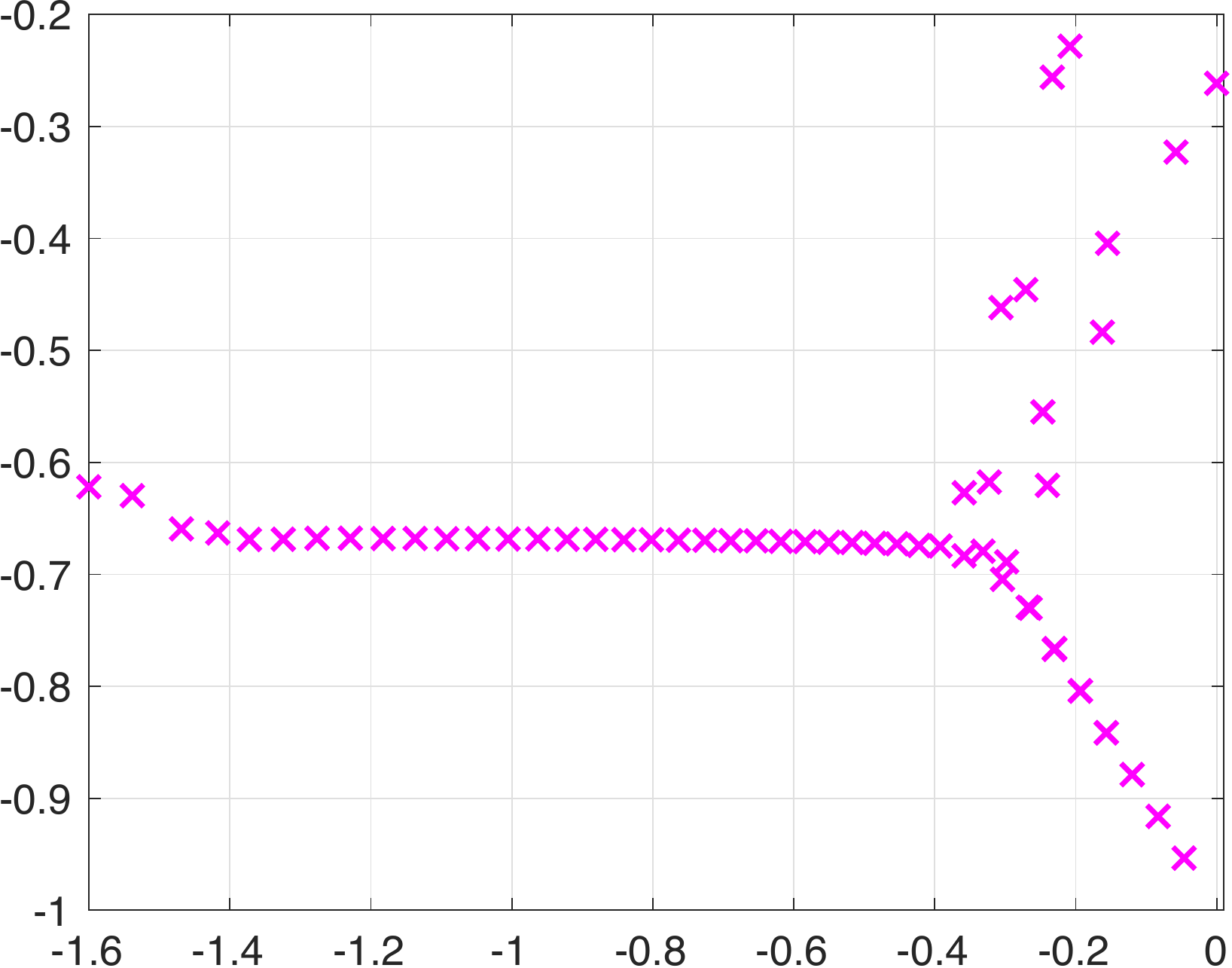}
  \end{minipage}   
  \begin{minipage}[t]{0.495\hsize}
     \includegraphics[width=\textwidth]{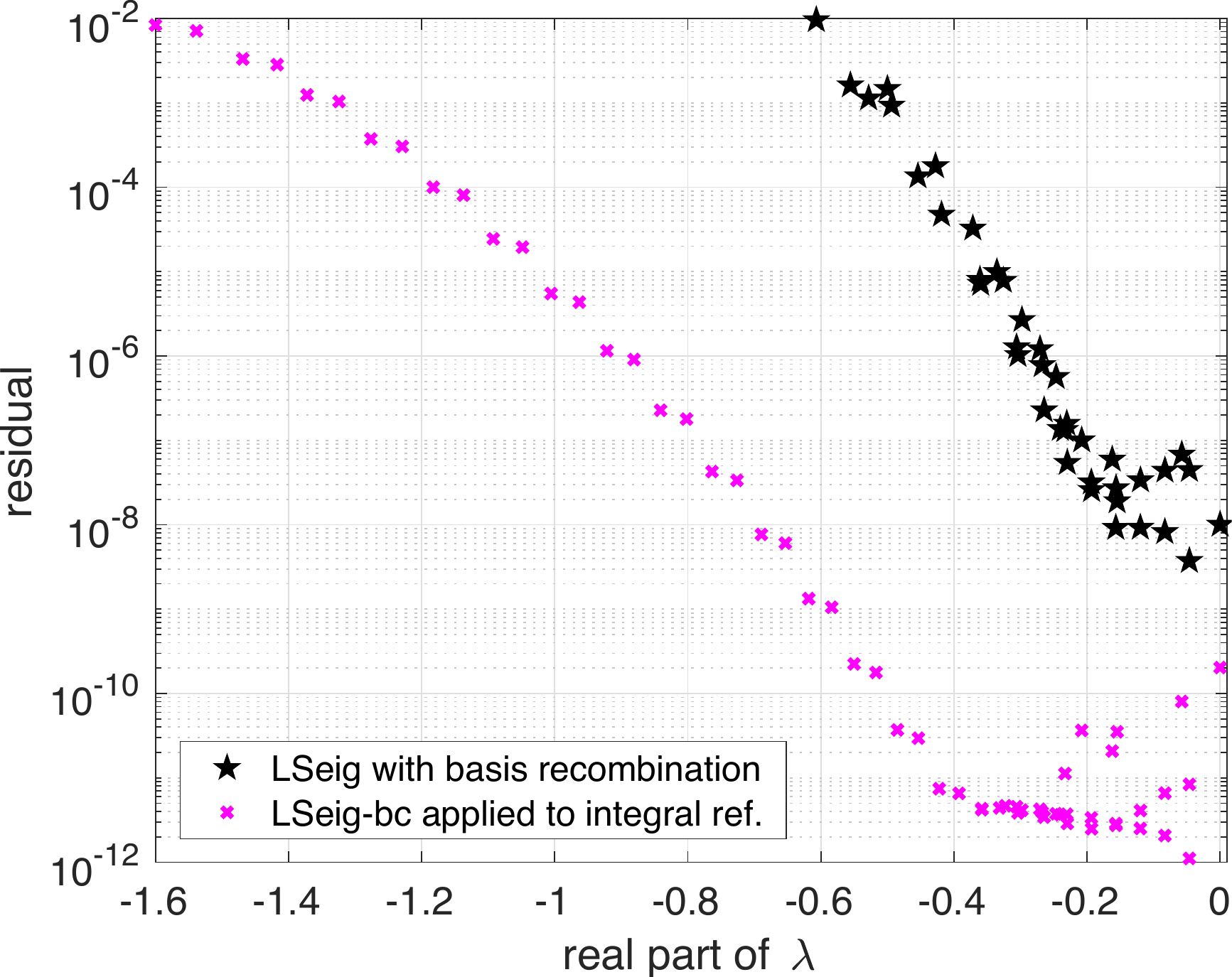}
  \end{minipage}
    \caption{Performance of methods for the Orr-Sommerfeld equation in Example~\ref{OS:ex} Left: 60 eigenvalues computed with integral reformulation. The right-most eigenvalue is $ -7.8191 \times 10^{-5} - 0.26157i$. Right: Relative residuals in~\eqnref{eq:OS} for 60 eigenvalues obtained with integral reformulation and 39 eigenvalues obtained with basis recombination applied to the original differential problem}
    \label{fig:OS_eigs}
\end{figure}

\begin{table}[h]
  \centering
  \caption{Residuals $\|\mathcal{L}_A u - \lambda\mathcal{L}_B u\|_2/\|\mathcal{L}_A u\|_2$ for the six eigenvalues of largest real part in Example~\ref{OS:ex}.}
      \label{tab:OS}
\footnotesize
  \begin{tabular}{c|c|c|c}
\rule{0pt}{2.6ex}  Chebfun eigs &  LSeig-bc  &  LSeig (BR) & LSeig-bc (int)\\
\hline
   $3.2\times 10^{-5}$  & \   $5.9 \times 10^{-5}$  \rule{0pt}{2.6ex} & $1.0 \times 10^{-8}$ & $2.0 \times 10^{-10}$  \\
   $7.0\times 10^{-7} $ &   $1.9\times 10^{-4}$  &  $4.5 \times 10^{-8}$ & $8.4 \times 10^{-12}$  \\
   $3.4\times 10^{-6} $ &   $1.6\times 10^{-4} $ &  $3.7 \times 10^{-9}$ & $1.1 \times 10^{-12}$ \\
   $1.2\times 10^{-5} $ &   $1.1\times 10^{-4}$  &  $6.8 \times 10^{-8}$ & $8.0 \times 10^{-11}$  \\
   $2.8\times 10^{-6} $ &   $3.7\times 10^{-4} $ &  $4.4 \times 10^{-8}$ & $6.6 \times 10^{-12}$ \\
   $9.2\times 10^{-6}$  &   $3.1\times 10^{-4}$  &  $8.2 \times 10^{-9}$ & $2.1 \times 10^{-12}$   
  \end{tabular}
\end{table}\normalsize

There are a number of techniques that can often improve the performance of LSeig. For example, \emph{balancing} (diagonal similarity or diagonal equivalece transformation) is a standard technique for improving the conditioning of eigenvalue problems. As a simple version of this, we have observed that the residuals reported in the second and third columns of Table~\ref{tab:OS} improve by a factor of $10^{-2}$ to $10^{-3}$ if we divide $\quasi{A}$ by $\nu := \frac{\|\quasi{A}\|_F}{\|\quasi{B}\|_F} \approx 100$ and then recover $\lambda$'s by multiplying computed eigenvalues by $\nu$.

\end{example}

Our next example is an eigenvalue problem with eigenvalue-dependent boundary conditions. 
\begin{example}~\label{Chanane:ex} \normalfont
\emph{$\lambda$-dependent boundary conditions}.
\sloppy{We consider the problem $-u''(x) = \lambda u(x)$ where $x\in [0,1]$ with boundary conditions $-u(0) = (\lambda +d) u'(0)$ and $u(1) = \lambda u'(1)$ in which $d = -4\pi^2$. This is a problem taken from~\cite{binding97, Chanane05}. The first three eigenvalues of this problem have been computed in~\cite{Chanane05} by the regularized sampling method (RSM) which is based on the Shannon sampling theory. Using 100 Chebyshev polynomials and $\mbox{\texttt{tol}} = 10^{-9}$, our method computes 42 real eigenvalues in $0.14$ seconds. Table~\ref{Chanane:tab} reports our results with those available in~\cite{Chanane05} which sets ``precision'' to $10^{-10}$. In each instance, underlined digits are those which match the exact eigenvalues. Note that RSM is applicable even if boundary conditions are nonlinear in terms of $\lambda$.}
\begin{table}
 \caption{Three smallest real eigenvalues of the problem in Example~\ref{Chanane:ex}. \label{Chanane:tab}}
\begin{center}
\footnotesize 
        \begin{tabular}{c|c|c}
    exact & RSM \cite{Chanane05} \rule{0pt}{2.6ex} \rule[-1.2ex]{0pt}{0pt}& \texttt{LSeig} \rule{0pt}{2.6ex}\\
\hline
9.730886578213082033 & \underline{9.73088}7696302056807 \rule{0pt}{2.6ex} \rule[-1.2ex]{0pt}{0pt}& $\underline{9.7308865782}21018$ \\
\hline
88.76331625258976337 & \underline{88.763}23738197181406 \rule{0pt}{2.6ex} \rule[-1.2ex]{0pt}{0pt}& $\underline{88.76331625258}112$ \\
\hline
157.88411043863472059 & \underline{157.884}22274978466468 \rule{0pt}{2.6ex}\rule[-1.2ex]{0pt}{0pt}& $\underline{157.8841104386}164$ 
\end{tabular}
\normalsize
\end{center}
\end{table}

\end{example}

\section{Discussion}\label{sec:discuss}
Let us close with a high-level discussion of the different approaches of various methods. In the introduction we discussed the differences among spectral methods (collocation, Galerkin) and LSeig, which we summarize in Table~\ref{tab:summary}. Recall that LSeig has a flavor of both spectral collocation and Galerkin methods. 

\begin{table}[htbp]
\caption{Summary and comparison of spectral methods.}  \label{tab:summary}
\centering
\begin{tabular}{c|ccc}
& Galerkin &  collocation &  LS-eig \\\hline
$u=\sum c_i\phi_i$ represent  & $c_i$ & $u(x_i)$ & $c_i$\\
operator  & residual orth. & force at $x_i$ & least-squares \\
 boundary conditions& impose on $\phi_i$ & replace rows & least-squares 
\end{tabular}
\end{table}

Another intriguing class of methods is the method of fundamental solutions \cite{fox1967approximations}, revisited (among others) in \cite{betcke2005reviving}. These methods proceed in the `opposite' manner to spectral Galerkin/coefficient methods: whereas spectral Galerkin/coefficient methods use basis functions satisfying the boundary conditions and fit the operator, 
the method of fundamental solutions uses basis functions that satisfy the operator exactly, and then fits the boundary conditions, usually in the least-squares sense. In this sense LSeig can be regarded as a mixture of spectral coefficient methods and the method of fundamental solutions. We summarize the discussion in Table~\ref{tab:fund}. 

\begin{table}[htbp]
  \centering
\caption{Comparison between spectral methods, method of fundamental solutions and LSeig. }
\label{tab:fund}
  \begin{tabular}{c|ccc}
& Galerkin &  fund. soln &  LSeig \\\hline
opeartor & choose $c_i$ & $\checkmark$ & choose $c_i$\\
 boundary conditions & $\checkmark$ & choose $c_i$ & choose $c_i$ \\
  \end{tabular}
\end{table}

We believe the least-squares framework of LSeig and LSode would be an attractive alternative when nonstandard and tailor-made basis functions are available or can be computed. 

\section*{Acknowledgments}
The authors are grateful to Nick Trefethen for his encouragement, insightful discussions and valuable comments on a draft. They also thank Davoud Mirzaei for pointing out the connection to LSFEM. 

\bibliographystyle{siamplain}
\bibliography{references}

\begin{thebibliography}{10}

\bibitem{aurentz17}
{\sc J.~L. Aurentz and L.~N. Trefethen}, {\em Block operators and spectral
  discretizations}, SIAM Rev., 59 (2017), pp.~423--446,
  \url{https://doi.org/10.1137/16M1065975}.

\bibitem{battles05}
{\sc Z.~Battles}, {\em Numerical Linear Algebra for Continuous Functions}, PhD
  thesis, University of Oxford, 2005.

\bibitem{battles04}
{\sc Z.~Battles and L.~N. Trefethen}, {\em An extension of {MATLAB} to
  continuous functions and operators}, SIAM J. Sci. Comput., 25 (2004),
  pp.~1743--1770, \url{https://doi.org/10.1137/S1064827503430126}.

\bibitem{betcke2005reviving}
{\sc T.~Betcke and L.~N. Trefethen}, {\em Reviving the method of particular
  solutions}, SIAM Rev., 47 (2005), pp.~469--491,
  \url{https://doi.org/10.1137/S0036144503437336}.

\bibitem{binding97}
{\sc P.~A. Binding and P.~J. Browne}, {\em Oscillation theory for indefinite
  {S}turm-{L}iouville problems with eigenparameter-dependent boundary
  conditions}, Proc. Roy. Soc. Edinburgh Sect. A, 127 (1997), pp.~1123--1136,
  \url{https://doi.org/10.1017/S0308210500026974}.

\bibitem{birkhoff08issue4}
{\sc G.~D. Birkhoff}, {\em Boundary value and expansion problems of ordinary
  linear differential equations}, Trans. Amer. Math. Soc., 9 (1908),
  pp.~373--395, \url{https://doi.org/10.2307/1988661}.

\bibitem{birkhoff08issue2}
{\sc G.~D. Birkhoff}, {\em On the asymptotic character of the solutions of
  certain linear differential equations containing a parameter}, Trans. Amer.
  Math. Soc., 9 (1908), pp.~219--231, \url{https://doi.org/10.2307/1988652}.

\bibitem{Bochev09}
{\sc P.~B. Bochev and M.~D. Gunzburger}, {\em Least-Squares Finite Element
  Methods}, Springer, New York, 2009, \url{https://doi.org/10.1007/b13382}.

\bibitem{boutry05}
{\sc G.~Boutry, M.~Elad, G.~H. Golub, and P.~Milanfar}, {\em The generalized
  eigenvalue problem for nonsquare pencils using a minimal perturbation
  approach}, SIAM J. Matrix Anal. Appl., 27 (2005), pp.~582--601,
  \url{http://dx.doi.org/10.1137/S0895479803428795}.

\bibitem{Boyd2001}
{\sc J.~P. Boyd}, {\em Chebyshev and Fourier Spectral Methods}, Dover, 2001.

\bibitem{Canuto2006}
{\sc C.~Canuto, M.~Y. Hussaini, A.~Quarteroni, and T.~A. Zang}, {\em Spectral
  Methods - Fundamentals in Single Domains}, Springer, 2006.

\bibitem{Chan12}
{\sc C.-H. Chan}, {\em Some eigenvalue problems for vectorial
  {S}turm-{L}iouville equations with eigenparameter dependent boundary
  conditions}, Trans. Amer. Math. Soc., 364 (2012), pp.~119--136,
  \url{https://doi.org/10.1090/S0002-9947-2011-05269-4}.

\bibitem{Chanane05}
{\sc B.~Chanane}, {\em Computation of the eigenvalues of {S}turm-{L}iouville
  problems with parameter dependent boundary conditions using the regularized
  sampling method}, Math. Comp., 74 (2005), pp.~1793--1801,
  \url{https://doi.org/10.1090/S0025-5718-05-01717-5}.

\bibitem{das2013solving}
{\sc S.~Das and A.~Neumaier}, {\em Solving overdetermined eigenvalue problems},
  SIAM J. Sci. Comput., 35 (2013), pp.~A541--A560,
  \url{https://doi.org/10.1137/110828514}.

\bibitem{davies2000pseudospectra}
{\sc E.~B. Davies}, {\em Pseudospectra of differential operators}, Journal of
  Operator Theory,  (2000), pp.~243--262.

\bibitem{deBoor91}
{\sc C.~de~Boor}, {\em An alternative approach to (the teaching of) rank,
  basis, and dimension}, Linear Algebra Appl., 146 (1991), pp.~221--229,
  \url{https://doi.org/10.1016/0024-3795(91)90026-S}.

\bibitem{driscoll2010automatic}
{\sc T.~A. Driscoll}, {\em Automatic spectral collocation for integral,
  integro-differential, and integrally reformulated differential equations}, J.
  Comput. Phys., 229 (2010), pp.~5980--5998,
  \url{https://doi.org/10.1016/j.jcp.2010.04.029}.

\bibitem{driscoll2016rectangular}
{\sc T.~A. Driscoll and N.~Hale}, {\em Rectangular spectral collocation}, IMA
  J. Numer. Anal., 36 (2016), pp.~108--132,
  \url{https://doi.org/10.1093/imanum/dru062}.

\bibitem{Chebfun14}
{\sc T.~A. Driscoll, N.~Hale, and L.~N. Trefethen}, {\em Chebfun Guide},
  Pafnuty Publications, Oxford, 2014.

\bibitem{folland1992fourier}
{\sc G.~B. Folland}, {\em Fourier Analysis and its Applications}, AMS, 1992.

\bibitem{fox1967approximations}
{\sc L.~Fox, P.~Henrici, and C.~Moler}, {\em Approximations and bounds for
  eigenvalues of elliptic operators}, SIAM J. Numer. Anal., 4 (1967),
  pp.~89--102, \url{https://doi.org/10.1137/0704008}.

\bibitem{Gheorghiu05}
{\sc C.-I. {Gheorghiu}}, {\em {Spectral Methods for Non-Standard Eigenvalue
  Problems. Fluid and Structural Mechanics and Beyond}}, Cham: Springer, 2014.

\bibitem{Gheorghiu96}
{\sc C.-I. Gheorghiu and I.~S. Pop}, {\em A modified {C}hebyshev-tau method for
  a hydrodynamic stability problem}, in Approximation and optimization, {V}ol.
  {II} ({C}luj-{N}apoca, 1996), Transilvania, Cluj-Napoca, 1997, pp.~119--126,
  \url{https://doi.org/10.1006/abio.1997.2380}.

\bibitem{gilles2019continuous}
{\sc M.~A. Gilles and A.~Townsend}, {\em Continuous analogues of {K}rylov
  subspace methods for differential operators}, SIAM J. Numer. Anal., 57
  (2019), pp.~899--924.

\bibitem{golub80TLS}
{\sc G.~H. Golub and C.~F. Van~Loan}, {\em An analysis of the total least
  squares problem}, SIAM J. Numer. Anal., 17 (1980), pp.~883--893,
  \url{https://doi.org/10.1137/0717073}.

\bibitem{GVLbook13}
{\sc G.~H. Golub and C.~F. Van~Loan}, {\em Matrix Computations}, Johns Hopkins
  University Press, Baltimore, MD, fourth~ed., 2013.

\bibitem{gottlieb1977numerical}
{\sc D.~Gottlieb and S.~A. Orszag}, {\em Numerical Analysis of Spectral
  Methods: Theory and Applications}, SIAM, Philadelphia, PA, 1977.

\bibitem{Greenberg01}
{\sc L.~Greenberg and M.~Marletta}, {\em Numerical solution of non-self-adjoint
  {S}turm-{L}iouville problems and related systems}, SIAM J. Numer. Anal., 38
  (2001), pp.~1800--1845, \url{https://doi.org/10.1137/S0036142999358743}.

\bibitem{greengard1991spectral}
{\sc L.~Greengard}, {\em Spectral integration and two-point boundary value
  problems}, SIAM J. Numer. Anal., 28 (1991), pp.~1071--1080,
  \url{https://doi.org/10.1137/0728057}.

\bibitem{griffiths2018introduction}
{\sc D.~J. Griffiths and D.~F. Schroeter}, {\em Introduction to Quantum
  Mechanics}, Cambridge University Press, 2018.

\bibitem{hesthaven2007spectral}
{\sc J.~S. Hesthaven, S.~Gottlieb, and D.~Gottlieb}, {\em Spectral Methods for
  Time-Dependent Problems}, Cambridge University Press, Cambridge, 2007,
  \url{https://doi.org/10.1017/CBO9780511618352}.

\bibitem{Hochstadt67}
{\sc H.~Hochstadt}, {\em On inverse problems associated with second-order
  differential operators}, Acta Math., 119 (1967), pp.~173--192,
  \url{https://doi.org/10.1007/BF02392082}.

\bibitem{itomurota2016}
{\sc S.~Ito and K.~Murota}, {\em An algorithm for the generalized eigenvalue
  problem for nonsquare matrix pencils by minimal perturbation approach}, SIAM
  J. Matrix Anal. Appl., 37 (2016), pp.~409--419,
  \url{https://doi.org/10.1137/14099231X}.

\bibitem{Li21}
{\sc J.-F. Li, W.~Li, X.-F. Duan, and M.~Xiao}, {\em Newton's method for the
  parameterized generalized eigenvalue problem with nonsquare matrix pencils},
  Adv. Comput. Math., 47 (2021), pp.~Paper No. 29, 50,
  \url{https://doi.org/10.1007/s10444-021-09855-w}.

\bibitem{Li20}
{\sc J.-F. Li, W.~Li, S.-W. Vong, Q.-L. Luo, and M.~Xiao}, {\em A {R}iemannian
  optimization approach for solving the generalized eigenvalue problem for
  nonsquare matrix pencils}, J. Sci. Comput., 82 (2020), pp.~Paper No. 67, 43,
  \url{https://doi.org/10.1007/s10915-020-01173-5}.

\bibitem{Li21NLAA}
{\sc J.-F. Li, K.~Wang, Y.-Y. Liu, X.-F. Duan, and X.-L. Zhou}, {\em A
  trust-region method for the parameterized generalized eigenvalue problem with
  nonsquare matrix pencils}, Numer. Linear Algebra Appl., 28 (2021), p.~Paper
  No. e2363, \url{https://doi.org/10.1002/nla.2363}.

\bibitem{Marletta03}
{\sc M.~{Marletta}, A.~{Shkalikov}, and C.~{Tretter}}, {\em {Pencils of
  differential operators containing the eigenvalue parameter in the boundary
  conditions}}, {Proc. R. Soc. Edinb., Sect. A, Math.}, 133 (2003),
  pp.~893--917.

\bibitem{olver2013fast}
{\sc S.~Olver and A.~Townsend}, {\em A fast and well-conditioned spectral
  method}, SIAM Rev., 55 (2013), pp.~462--489,
  \url{https://doi.org/10.1137/120865458}.

\bibitem{orszag1971accurate}
{\sc S.~A. Orszag}, {\em Accurate solution of the {O}rr--{S}ommerfeld stability
  equation}, Journal of Fluid Mechanics, 50 (1971), pp.~689--703.

\bibitem{pachon2010piecewise}
{\sc R.~Pach\'{o}n, R.~B. Platte, and L.~N. Trefethen}, {\em Piecewise-smooth
  chebfuns}, IMA J. Numer. Anal., 30 (2010), pp.~898--916,
  \url{http://dx.doi.org/10.1093/imanum/drp008}.

\bibitem{Pryce93}
{\sc J.~D. Pryce}, {\em Numerical Solution of {S}turm-{L}iouville Problems},
  Monographs on Numerical Analysis, Oxford University Press, New York, 1993.

\bibitem{schmid01}
{\sc P.~J. Schmid and D.~S. Henningson}, {\em Stability and Transition in Shear
  Flows}, New York, NY: Springer, 2001.

\bibitem{jdoriginal}
{\sc G.~L.~G. Sleijpen and H.~A. VanderVorst}, {\em {A Jacobi-Davidson
  iteration method for linear eigenvalue problems}}, SIAM J. Matrix Anal.
  Appl., {17} ({1996}), pp.~{401--425}.

\bibitem{stewart98}
{\sc G.~W. Stewart}, {\em Afternotes Goes to Graduate School}, SIAM,
  Philadelphia, PA, 1998, \url{https://doi.org/10.1137/1.9781611971422}.

\bibitem{tamarkin17}
{\sc J.~Tamarkin}, {\em Some General Problems of the Theory of Ordinary
  Differential Equations and Expansion of an Arbitrary Function in Series of
  Fundamental Functions (in Russian)}, PhD thesis, Petrograd, 1917.

\bibitem{tamarkin28}
{\sc J.~Tamarkin}, {\em Some general problems of the theory of ordinary linear
  differential equations and expansion of an arbitrary function in series of
  fundamental functions}, Math. Z., 27 (1928), pp.~1--54,
  \url{https://doi.org/10.1007/BF01171084}.

\bibitem{thompson72}
{\sc R.~C. Thompson}, {\em Principal submatrices. {IX}. {I}nterlacing
  inequalities for singular values of submatrices}, Linear Algebra Appl., 5
  (1972), pp.~1--12, \url{https://doi.org/10.1016/0024-3795(72)90013-4}.

\bibitem{townsend14}
{\sc A.~Townsend}, {\em Computing with Functions in Two Dimensions}, PhD
  thesis, University of Oxford, 2014.

\bibitem{townsend15}
{\sc A.~Townsend and L.~N. Trefethen}, {\em Continuous analogues of matrix
  factorizations}, Proc. A., 471 (2015), pp.~20140585, 21,
  \url{https://doi.org/10.1098/rspa.2014.0585}.

\bibitem{trefethen2000spectral}
{\sc L.~N. Trefethen}, {\em Spectral Methods in {MATLAB}}, SIAM, Philadelphia,
  PA, 2000, \url{https://doi.org/10.1137/1.9780898719598}.

\bibitem{trefethen2013householder}
{\sc L.~N. Trefethen}, {\em Householder triangularization of a quasimatrix},
  IMA J. Numer. Anal., 30 (2010), pp.~887--897,
  \url{https://doi.org/10.1093/imanum/drp018}.

\bibitem{trefethenatap}
{\sc L.~N. Trefethen}, {\em Approximation Theory and Approximation Practice,
  Extended Edition}, SIAM, Philadelphia, PA, 2019.

\bibitem{trefethenNLAbook}
{\sc L.~N. Trefethen and D.~Bau, III}, {\em Numerical Linear Algebra}, SIAM,
  Philadelphia, PA, 1997, \url{https://doi.org/10.1137/1.9780898719574}.

\bibitem{trefethen2017exploring}
{\sc L.~N. Trefethen, A.~Birkisson, and T.~A. Driscoll}, {\em Exploring ODEs},
  SIAM, 2017.

\bibitem{trefethenEmbree}
{\sc L.~N. Trefethen and M.~Embree}, {\em Spectra and Pseudospectra}, Princeton
  University Press, Princeton, NJ, 2005.

\bibitem{Trench21}
{\sc W.~F. Trench}, {\em Elementary Differential Equations with Boundary Value
  Problems}, Trinity University, 2021,
  \url{https://math.libretexts.org/@go/page/9475}.

\bibitem{Tretter01}
{\sc C.~{Tretter}}, {\em {Boundary eigenvalue problems for differential
  equations \(N\eta=\lambda P\eta\) with \(\lambda\)-polynomial boundary
  conditions}}, {J. Differ. Equations}, 170 (2001), pp.~408--471.

\bibitem{vanhuffel91book}
{\sc S.~Van~Huffel and J.~Vandewalle}, {\em The Total Least Squares Problem},
  SIAM, Philadelphia, PA, 1991, \url{https://doi.org/10.1137/1.9781611971002}.

\bibitem{Wright02}
{\sc T.~G. Wright and L.~N. Trefethen}, {\em Pseudospectra of rectangular
  matrices}, IMA J. Numer. Anal., 22 (2002), pp.~501--519,
  \url{http://dx.doi.org/10.1093/imanum/22.4.501}.

\end{thebibliography}

\ \\
{\bf Supplementary materials.} 
As explained before, a major difficulty with the generalized rectangular eigenvalue problem even in the discrete case is that the eigenpairs may fail to exist under perturbations. Motivated by the work \cite{boutry05} of Boutry, Elad, Golub, and Milanfar, we focus on the following reformulation of \cref{eigQuasi:eq} that searches for the minimal perturbation to the quasimatrix-matrix pencil $(\quasi{A}, \quasi{B})$ such that the perturbed pencil $(\quasi{\hat A}, \quasi{\hat B})$ has $n$ linearly independent eigenvectors: 
\begin{equation} 
\label{eq:eigQuasiMinPert}
\left\{ 
\begin{array}{ll}
\mbox{minimize}   &  \|[\quasi{\hat A} - \quasi{A}\ \ \ \quasi{\hat B} - \quasi{B}]\|_F^2;\\
\mbox{subject to} & \quasi{\hat A}, \quasi{\hat B}\in\mathbb{C}^{(\infty+d)\times n},\ \{(\lambda_k, {\bf v}_k)\}_{k=1}^{n} \subseteq \mathbb{C} \times \mathbb{C}^n,\\
                            & \quasi{\hat A} {\bf v}_k = \lambda_k \quasi{\hat B} {\bf v}_k, \quad k = 1,2,\dots,n, \\
                            & \{{\bf v}_1, {\bf v}_2, \dots, {\bf v}_n \}:\ \mbox{linearly independent}.
\end{array} 
\right.
\end{equation}

The next proposition gives a sufficient condition for the existence and uniqueness of the optimal solution to \eqnref{eq:eigQuasiMinPert} and hints an algorithm for its solution in terms of the following SVD\footnote{See \eqnref{eq:svdQuasiAB} for details but notice that it was for the case of $[\quasi{A} \ \quasi{B}] $.}.
\begin{equation} 
\label{eq:svdQuasiBA}
[\quasi{B}\ \ \quasi{A}]=\quasi{U}\Sigma V^* = 
[\quasi{U}_1\ \quasi{U}_2]
\begin{bmatrix}
\Sigma_{1:n}&\\& \Sigma_{n+1:2n}  
\end{bmatrix}
\begin{bmatrix}
V_{11}^*& V_{21}^*\\
V_{12}^*& V_{22}^*
\end{bmatrix}, 
\end{equation}
It is a direct extension of the results from the discrete case~\cite{golub80TLS, itomurota2016} to quasimatrix-matrix objects; see also \cite[p. 51]{vanhuffel91book}. To keep the paper self-contained we give a proof of the proposition which requires a few definitions and the following three lemmas. For a quasimatrix-matrix $\quasi{A} \in \mathbb{C}^{(\infty + d) \times n}$ we define
\begin{equation}
\label{2normDef:eq}
\|\quasi{A}\|_2 := \max_{0 \neq {\bf x} \in \mathbb{C}^n} \frac{\|\quasi{A} {\bf x}\|_2}{\| {\bf x}\|_2},
\end{equation}
and 
\[
\|\quasi{A}\|_F := \Big( \mbox{trace}(\quasi{A}^{\ast} \quasi{A})\Big)^{1/2} = \Big( \sum_{i=1}^n \sigma_{i}^2(\quasi{A}) \Big)^{1/2}.
\]
In addition, $\quasi{A}$ is called unitary if $\quasi{A}^\ast \quasi{A} = I_{n}$.

\begin{lemma} \label{2norm_uni_inv:lem}
\normalfont
For any $W \in \mathbb{C}^{n \times k}$ and unitary quasimatrix-matrix $\quasi{U} \in \mathbb{C}^{(\infty + d) \times n}$, we have
\[
\|\quasi{U} W\|_2 =\|W\|_2.
\]
\end{lemma}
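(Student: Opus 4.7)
The plan is to reduce the identity to the fundamental isometry property of a unitary quasimatrix-matrix, namely that $\quasi{U}^\ast \quasi{U} = I_n$ implies $\|\quasi{U} {\bf y}\|_2 = \|{\bf y}\|_2$ for every ${\bf y} \in \mathbb{C}^n$. Once this pointwise isometry is established, the equality of the induced operator norms will follow immediately from the definition in~\eqref{2normDef:eq}.

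First I would verify the isometry property. For any ${\bf y} \in \mathbb{C}^n$, compute
\[
\|\quasi{U} {\bf y}\|_2^2 = (\quasi{U} {\bf y})^\ast (\quasi{U} {\bf y}) = {\bf y}^\ast (\quasi{U}^\ast \quasi{U}) {\bf y} = {\bf y}^\ast I_n {\bf y} = \|{\bf y}\|_2^2,
\]
where the inner product between function-vector objects is the natural one induced by the $L_2$-inner product on the function part and the Euclidean inner product on the discrete part (so that $\quasi{U}^\ast \quasi{U}$ is indeed the $n \times n$ identity matrix by the unitarity assumption). The small point to be careful about is that the ``$(\quasi{U} {\bf y})^\ast(\quasi{U} {\bf y})$'' expansion really does yield ${\bf y}^\ast \quasi{U}^\ast \quasi{U} {\bf y}$; this associativity is straightforward because $\quasi{U}$ acts linearly on $\mathbb{C}^n$ and the inner product splits blockwise into the function and matrix parts.

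With the isometry in hand, for any ${\bf x} \in \mathbb{C}^k$ set ${\bf y} = W {\bf x} \in \mathbb{C}^n$. Then $\|\quasi{U} W {\bf x}\|_2 = \|W {\bf x}\|_2$, and hence by~\eqref{2normDef:eq}
\[
\|\quasi{U} W\|_2 = \max_{0 \neq {\bf x} \in \mathbb{C}^k} \frac{\|\quasi{U} W {\bf x}\|_2}{\|{\bf x}\|_2} = \max_{0 \neq {\bf x} \in \mathbb{C}^k} \frac{\|W {\bf x}\|_2}{\|{\bf x}\|_2} = \|W\|_2,
\]
which is the claimed equality. I do not anticipate any real obstacle: the only place one must take care is the inner-product bookkeeping for quasimatrix-matrix objects, but this is precisely what is encoded by the convention $\quasi{U}^\ast \quasi{U} = I_n$ used throughout Section~\ref{sec:svd}.
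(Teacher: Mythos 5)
Your proposal is correct and follows essentially the same route as the paper's proof: establish the isometry $\|\quasi{U}{\bf y}\|_2=\|{\bf y}\|_2$ from $\quasi{U}^\ast\quasi{U}=I_n$, then pass to the induced norm via~\eqref{2normDef:eq}. If anything, your version is slightly more careful with the squaring in the inner-product computation than the paper's own one-line argument.
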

\begin{proof}
For every ${\bf x}\in \mathbb{C}^n$ we have $\|\quasi{U} {\bf x}\|_2 = \|(\quasi{U} {\bf x})^\ast (\quasi{U} {\bf x})\|_2 = \| {\bf x}^\ast I_n {\bf x}\|_2 = \|{\bf x}\|_2$. The invariance of the 2-norm follows from~\eqnref{2normDef:eq}.
\end{proof}

The following continuous analogue of the well-known Eckart-Young-Mirsky theorem will be used. It is stated in terms of the Frobenius norm of a quasimatrix-matrix. 
\begin{lemma} \label{bestlowrankQuasi:lem} 
The first k-terms in the SVD of a quasimatrix-matrix $\quasi{A} \in \mathbb{C}^{(\infty+d) \times n}$ form its best rank-k approximation in the Frobenius norm.
\end{lemma}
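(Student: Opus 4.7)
The plan is to reduce to the classical finite-dimensional Eckart-Young-Mirsky theorem by exploiting the QR-based construction of the SVD of a quasimatrix-matrix developed in Section~\ref{sec:svd}. Write
\[
\begin{bmatrix}\tilde{\quasi{A}} \\ C\end{bmatrix} = \quasi{A}
= \underbrace{\begin{bmatrix}\quasi{Q}_A & 0 \\ 0 & Q_C\end{bmatrix}}_{=:\,\quasi{Q}}
\underbrace{\begin{bmatrix}R_A \\ R_C\end{bmatrix}}_{=:\,R},
\qquad
R = U_R \Sigma V^{\ast},
\]
so that $\quasi{A} = (\quasi{Q} U_R)\Sigma V^{\ast}$ and $\quasi{Q}^{\ast}\quasi{Q} = I_{n+d}$. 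The candidate optimum is $\quasi{A}_k := \quasi{Q} U_R \Sigma_k V^{\ast}$, where $\Sigma_k$ zeroes out all but the leading $k$ singular values, i.e.\ the first $k$ terms of the SVD of $\quasi{A}$.

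The first step is to record two simple facts. Because $\quasi{Q}^{\ast}\quasi{Q} = I$, for any matrix $W\in\mathbb{C}^{(n+d)\times n}$ one has
\[
\|\quasi{Q} W\|_F^2 = \operatorname{trace}(W^{\ast}\quasi{Q}^{\ast}\quasi{Q}W) = \operatorname{trace}(W^{\ast}W) = \|W\|_F^2,
\]
so multiplication by $\quasi{Q}$ preserves the Frobenius norm (the analogue of Lemma~\ref{2norm_uni_inv:lem} for $\|\cdot\|_F$). Second, the orthogonal projector $\quasi{P} := \quasi{Q}\quasi{Q}^{\ast}$ onto the column space of $\quasi{Q}$ satisfies $\quasi{P}\quasi{A} = \quasi{A}$, and for any quasimatrix-matrix $\quasi{B}$ of rank at most $k$, the product $\quasi{P}\quasi{B}$ again has rank at most $k$.

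Now let $\quasi{B}$ be any competitor of rank $\le k$. Decompose $\quasi{B} = \quasi{P}\quasi{B} + (I-\quasi{P})\quasi{B}$; since the two summands have columns in orthogonal subspaces and $\quasi{A}$ lies in the range of $\quasi{P}$, a Pythagorean identity gives
\[
\|\quasi{A}-\quasi{B}\|_F^2 = \|\quasi{A}-\quasi{P}\quasi{B}\|_F^2 + \|(I-\quasi{P})\quasi{B}\|_F^2 \;\ge\; \|\quasi{A}-\quasi{P}\quasi{B}\|_F^2.
\]
Thus it suffices to minimize over $\quasi{B}$ in the column space of $\quasi{Q}$, i.e.\ $\quasi{B} = \quasi{Q}\tilde B$ with $\tilde B\in\mathbb{C}^{(n+d)\times n}$ of rank $\le k$. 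For such $\quasi{B}$, the norm-preservation fact yields $\|\quasi{A}-\quasi{B}\|_F = \|R - \tilde B\|_F$, and the classical Eckart-Young-Mirsky theorem identifies the unique (up to degenerate singular values) minimizer as $\tilde B = U_R\Sigma_k V^{\ast}$. Multiplying back by $\quasi{Q}$ recovers $\quasi{A}_k$, completing the argument.

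The only delicate point is the orthogonality/projection step: one must be comfortable that $\quasi{Q}\quasi{Q}^{\ast}$ is a well-defined orthogonal projector on the $(\infty+d)$-dimensional ambient space and that Pythagoras applies column-by-column to the Frobenius norm of a quasimatrix-matrix. Both follow directly from the definition $\|\cdot\|_F^2 = \operatorname{trace}(\cdot^{\ast}\cdot)$ and the quasimatrix inner product $\ip{\cdot}{\cdot}$ used throughout Section~\ref{sec:svd}, so no genuinely new machinery is required.
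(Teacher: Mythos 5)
Your proof is correct. Note, though, that the paper does not actually supply an argument for this lemma: its ``proof'' is a one-line citation asserting that the reasoning for quasimatrices in Townsend's work carries over to quasimatrix-matrix objects. Your write-up is therefore more detailed than anything in the paper, and it follows the route the paper uses \emph{computationally} in Section~\ref{sec:svd}: factor $\quasi{A}=\quasi{Q}R$ with $\quasi{Q}^{\ast}\quasi{Q}=I$, observe that left-multiplication by $\quasi{Q}$ preserves the Frobenius norm, use the projector $\quasi{Q}\quasi{Q}^{\ast}$ together with a columnwise Pythagorean identity to show that no rank-$k$ competitor outside the column space of $\quasi{Q}$ can beat its own projection, and then invoke the classical Eckart--Young--Mirsky theorem for the discrete factor $R$. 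All the steps check out: the rank of $\quasi{P}\quasi{B}$ indeed cannot exceed that of $\quasi{B}$, every rank-$\le k$ object in the range of $\quasi{Q}$ is $\quasi{Q}\tilde B$ with $\operatorname{rank}\tilde B\le k$ because $\quasi{Q}$ is injective, and the Pythagorean split is legitimate since the Frobenius norm is a sum of columnwise $L^2\times\mathbb{C}^d$ norms. The only cosmetic caveat is your implicit assumption that the QR factor of the $d\times n$ block is $d\times d$ so that $R$ is $(n+d)\times n$; this matches the paper's own bookkeeping and does not affect the argument. In short, your proposal fills in, self-containedly, a proof the paper delegates to references, and it buys a concrete reduction to the finite-dimensional theorem rather than an appeal to an ``analogous'' continuous argument.
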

\begin{proof}
The reasoning is analogous to that of quasimatrices as in~\cite[p. 62]{townsend14} and~\cite{townsend15}. 
\end{proof}

The next lemma is a continuous analogue of the the result in \cite[p. 321]{GVLbook13} for the discrete case.
\begin{lemma} \label{TLSsing:lem}
\normalfont
Let $\quasi{A}, \quasi{B} \in  \mathbb{C}^{(\infty+d) \times n}$ and consider the SVD~\eqnref{eq:svdQuasiBA} of $[\quasi{B}\ \ \quasi{A}]$. If $\sigma_{n}(\quasi{B})> \sigma_{n+1}([\quasi{B}\ \ \quasi{A}])$, then $V_{11}$ and $V_{22}$ are nonsingular and $\sigma_{n}([\quasi{B}\ \ \quasi{A}])> \sigma_{n+1}([\quasi{B}\ \ \quasi{A}])$. 
\end{lemma}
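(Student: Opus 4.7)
The plan is to establish the singular-value gap and nonsingularity of $V_{22}$ first, and then deduce that $V_{11}$ is nonsingular from pure unitarity of the $2n\times 2n$ matrix $V$. The strict inequality $\sigma_n([\quasi{B}\ \quasi{A}])>\sigma_{n+1}([\quasi{B}\ \quasi{A}])$ will fall out of the gap together with the hypothesis.

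For the gap, I would invoke the Courant--Fischer max-min characterization of singular values applied to the operator $[\quasi{B}\ \quasi{A}]:\mathbb{C}^{2n}\to L^2[-1,1]\oplus\mathbb{C}^d$. Testing on the $n$-dimensional subspace of vectors $\begin{bmatrix}\mathbf{y}_1\\ 0\end{bmatrix}$, the operator reduces to $\mathbf{y}_1\mapsto\quasi{B}\mathbf{y}_1$, whose smallest singular value on that subspace is $\sigma_n(\quasi{B})$. By max-min we get $\sigma_n([\quasi{B}\ \quasi{A}])\geq\sigma_n(\quasi{B})$, and combining with the hypothesis yields $\sigma_n([\quasi{B}\ \quasi{A}])\geq\sigma_n(\quasi{B})>\sigma_{n+1}([\quasi{B}\ \quasi{A}])$.

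To show $V_{22}$ is nonsingular, I would argue by contradiction: suppose $V_{22}\mathbf{x}=0$ for some $\mathbf{x}\neq 0$. The block form of $V^*V=I_{2n}$ reads $V_{12}^*V_{12}+V_{22}^*V_{22}=I_n$, so $\|V_{12}\mathbf{x}\|_2=\|\mathbf{x}\|_2$. The key step is to evaluate $[\quasi{B}\ \quasi{A}]V\begin{bmatrix}0\\\mathbf{x}\end{bmatrix}$ in two ways. Via the SVD $[\quasi{B}\ \quasi{A}]V=\quasi{U}\Sigma$ it equals $\quasi{U}_2\Sigma_{n+1:2n}\mathbf{x}$, whose 2-norm is at most $\sigma_{n+1}([\quasi{B}\ \quasi{A}])\|\mathbf{x}\|_2$ by Lemma~\ref{2norm_uni_inv:lem}. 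Expanding the matrix product directly and using $V_{22}\mathbf{x}=0$, the same object equals $\quasi{B}V_{12}\mathbf{x}$, whose norm is at least $\sigma_n(\quasi{B})\|V_{12}\mathbf{x}\|_2=\sigma_n(\quasi{B})\|\mathbf{x}\|_2$. These two bounds contradict the gap already established.

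For $V_{11}$, rather than replaying the previous argument (which is not symmetric in $\quasi{A}$ and $\quasi{B}$), I would exploit only the unitarity of the finite matrix $V$. The identities $V^*V=I_{2n}$ and $VV^*=I_{2n}$ give $V_{22}^*V_{22}=I_n-V_{12}^*V_{12}$ and $V_{11}V_{11}^*=I_n-V_{12}V_{12}^*$. Since $V_{12}^*V_{12}$ and $V_{12}V_{12}^*$ share the same spectrum, $V_{22}^*V_{22}\succ 0$ iff $V_{11}V_{11}^*\succ 0$; thus $V_{11}$ and $V_{22}$ are simultaneously (non)singular, and nonsingularity of $V_{11}$ is inherited. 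The main obstacle in the whole plan is not algebraic but conceptual: one must check that the finite-dimensional min-max argument and the 2-norm preservation $\|\quasi{U}_2\Sigma_{n+1:2n}\mathbf{x}\|_2=\|\Sigma_{n+1:2n}\mathbf{x}\|_2$ transfer cleanly to quasimatrix-matrix objects, which is ensured by Lemma~\ref{2norm_uni_inv:lem} and the orthonormality $\quasi{U}^*\quasi{U}=I_{2n}$ established in Section~\ref{sec:svd}.
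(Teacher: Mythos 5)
Your proof is correct, and its core --- the contradiction argument for $V_{22}$ via the block identity $\quasi{B}V_{12}+\quasi{A}V_{22}=\quasi{U}_2\Sigma_{n+1:2n}$ and the norm preservation of Lemma~\ref{2norm_uni_inv:lem} --- is exactly the paper's argument (you are slightly more careful in noting that $V_{22}\mathbf{x}=0$ forces $\|V_{12}\mathbf{x}\|_2=\|\mathbf{x}\|_2$ via $V^*V=I_{2n}$, a step the paper leaves implicit, and the contradiction is really with the hypothesis $\sigma_n(\quasi{B})>\sigma_{n+1}([\quasi{B}\ \ \quasi{A}])$ rather than with the derived gap, but that is cosmetic). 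You diverge from the paper in two places. For the interlacing bound $\sigma_n([\quasi{B}\ \ \quasi{A}])\geq\sigma_n(\quasi{B})$ you use Courant--Fischer on the continuous object directly, whereas the paper reduces to the $R$ factor of the QR factorization and cites discrete interlacing; these are close cousins (since $\|[\quasi{B}\ \ \quasi{A}]\mathbf{y}\|_2^2=\mathbf{y}^*R^*R\mathbf{y}$, your max--min is just Courant--Fischer for the finite Gram matrix), and either is fine, though you should state explicitly that the max--min characterization transfers because the Gram matrix is a finite Hermitian matrix. More substantively, you actually prove the nonsingularity of $V_{11}$, via $V_{11}V_{11}^*=I_n-V_{12}V_{12}^*$ and $V_{22}^*V_{22}=I_n-V_{12}^*V_{12}$ having the same spectrum of ``defect''; the paper's proof asserts both $V_{11}$ and $V_{22}$ are nonsingular but only argues for $V_{22}$, so your unitarity argument fills a genuine (if small) gap in the written proof.
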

\begin{proof} 
We first use proof by contradiction to show that $V_{22}$ is nonsingular. Assume that there exists a vector ${\bf x}$ with unit 2-norm such that $V_{22} {\bf x} = 0$.
The second equation in $[\quasi{B}\ \ \quasi{A}] V = [\quasi{U}_1\ \ \quasi{U}_2] \Sigma$ reads as $\quasi{B} V_{12} + \quasi{A} V_{22} = \quasi{U}_2 \Sigma_{n+1:2n}$ meaning that 
\begin{equation}
\label{formula1:eq}
\| \quasi{B} V_{12} {\bf x} \|_2 = \| \quasi{U}_2 \Sigma_{n+1:2n} {\bf x}\|_2.
\end{equation}
As we saw in Lemma~\ref{2norm_uni_inv:lem}, the 2-norm is invariant under multiplication by a quasimatrix-matrix like $\quasi{U}_2$ whose columns are orthonormal function-vectors. Therefore, using \eqnref{formula1:eq} we have
\[
\sigma_{n+1}([\quasi{B}\ \ \quasi{A}]) = \| \Sigma_{n+1:2n} \|_2 = \| \quasi{U}_2 \Sigma_{n+1:2n} \|_2 \geq \| \quasi{U}_2 \Sigma_{n+1:2n}  {\bf x} \|_2 = \| \quasi{B} V_{12} {\bf x} \|_2 \geq \sigma_{\min}(\quasi{B}),
\]
which is a contradiction. 

The second part follows if we prove that $\sigma_{n}([\quasi{B}\ \ \quasi{A}]) \geq \sigma_{n}(\quasi{B})$. This is an interlacing property for singular values of a quasimatrix-matrix which is valid because the singular values of any quasimatrix-matrix are just the singular values of the $R$ factor of its QR factorization (see~\eqnref{svd_R_concat:eq} and~\eqnref{eq:svdRfactorquasimat-mat}) and the $R$ factor is always a discrete matrix for which the interlacing property of singular values is a basic fact~\cite{thompson72}.
\end{proof}

\begin{proposition}\label{optSol:prop}
Let $\quasi{A}, \quasi{B} \in  \mathbb{C}^{(\infty+d) \times n}$ and consider the SVD~\eqnref{eq:svdQuasiBA} of $[\quasi{B}\ \ \quasi{A}]$. If $\sigma_{n}(\quasi{B})> \sigma_{n+1}([\quasi{B}\ \ \quasi{A}])$, then there exists a unique optimal solution to 
\eqnref{eq:eigQuasiMinPert} 
attained for
\[
\quasi{\hat A} = \quasi{A} - \quasi{U}_2 \Sigma_{n+1:2n} V_{22}^\ast, \quad \mbox{ and }  \quad
\quasi{\hat B} = \quasi{B} - \quasi{U}_2 \Sigma_{n+1:2n} V_{12}^\ast,
\]
if and only if $V_{12} V_{22}^{-1}$ is diagonalizable.
\end{proposition}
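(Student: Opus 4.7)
The plan is to reduce the problem to a best low-rank approximation of the quasimatrix-matrix $[\quasi{B}\ \quasi{A}]$ in Frobenius norm, and then translate the low-rank factor back into the desired eigenpair form via the structure of its null space. This is the natural lifting of the classical total-least-squares argument (Golub--Van Loan, Van Huffel--Vandewalle) and the Boutry--Elad--Golub--Milanfar treatment of the discrete rectangular pencil case to quasimatrix-matrix objects, relying on \cref{2norm_uni_inv:lem,bestlowrankQuasi:lem,TLSsing:lem}.

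First, I would rewrite the eigenvalue constraint as a rank constraint. Having $n$ eigenpairs $(\lambda_k,{\bf v}_k)$ with ${\bf v}_1,\ldots,{\bf v}_n$ linearly independent is equivalent to the existence of an invertible $V\in\mathbb{C}^{n\times n}$ and a diagonal $\Lambda\in\mathbb{C}^{n\times n}$ such that $\quasi{\hat A}V=\quasi{\hat B}V\Lambda$, i.e.,
\[
[\quasi{\hat B}\ \ \quasi{\hat A}]\begin{bmatrix}-V\Lambda\\V\end{bmatrix}=\quasi{0}.
\]
Since $\begin{bmatrix}-V\Lambda\\V\end{bmatrix}\in\mathbb{C}^{2n\times n}$ has rank $n$ (because $V$ is invertible), this forces $\mathrm{rank}([\quasi{\hat B}\ \ \quasi{\hat A}])\le n$. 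Thus \eqref{eq:eigQuasiMinPert} is at least as constrained as, and in fact equivalent to, finding the closest rank-$n$ quasimatrix-matrix $[\quasi{\hat B}\ \ \quasi{\hat A}]$ in Frobenius norm whose null space contains a subspace admitting a basis of the parametric form $\begin{bmatrix}-V\Lambda\\V\end{bmatrix}$.

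Next, I would invoke \cref{bestlowrankQuasi:lem} (Eckart--Young--Mirsky for quasimatrix-matrices) applied to $[\quasi{B}\ \ \quasi{A}]$ to identify the unconstrained best rank-$n$ approximant as the truncated SVD
\[
[\quasi{B}\ \ \quasi{A}]-\quasi{U}_2\Sigma_{n+1:2n}[V_{12}^{*}\ \ V_{22}^{*}],
\]
which gives exactly the candidate pair $(\quasi{\hat A},\quasi{\hat B})$ stated in the proposition. The hypothesis $\sigma_n(\quasi{B})>\sigma_{n+1}([\quasi{B}\ \ \quasi{A}])$ enters here through \cref{TLSsing:lem}: it guarantees both the strict gap $\sigma_n([\quasi{B}\ \ \quasi{A}])>\sigma_{n+1}([\quasi{B}\ \ \quasi{A}])$, which makes the truncated SVD (and hence the optimal $[\quasi{\hat B}\ \ \quasi{\hat A}]$) unique, and the nonsingularity of $V_{22}$, which we will need below. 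The null space of the truncated pencil is exactly $\mathrm{range}\begin{bmatrix}V_{12}\\V_{22}\end{bmatrix}$.

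Finally, I would connect the rank-$n$ approximant to the existence of the eigenstructure. Any invertible basis of the null space has the form $\begin{bmatrix}V_{12}S\\V_{22}S\end{bmatrix}$ for some invertible $S\in\mathbb{C}^{n\times n}$. Matching this with $\begin{bmatrix}-V\Lambda\\V\end{bmatrix}$ and using that $V_{22}$ is invertible yields $V=V_{22}S$ and $\Lambda=-S^{-1}(V_{22}^{-1}V_{12})S$. Thus one can choose $S$ realizing the required eigenstructure if and only if $V_{22}^{-1}V_{12}$ (equivalently, by similarity through $V_{22}$, the matrix $V_{12}V_{22}^{-1}$) is diagonalizable; in that case $S$ is the eigenvector matrix and $-\Lambda$ its eigenvalue diagonal, and the pair $(\quasi{\hat A},\quasi{\hat B})$ is feasible and hence optimal. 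Combined with the uniqueness of the truncated SVD under the singular-value gap, this gives the stated "if and only if" together with uniqueness of the minimizer.

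The main obstacle is really the bookkeeping in the third step: ensuring that the optimum of the enlarged rank-constrained problem is attained within the smaller feasible set of \eqref{eq:eigQuasiMinPert}, and simultaneously arguing that when $V_{12}V_{22}^{-1}$ is not diagonalizable no perturbation achieving the same Frobenius norm can be feasible (so the infimum is not attained). This is handled by noting that any feasible $[\quasi{\hat B}\ \ \quasi{\hat A}]$ of rank $\le n$ other than the SVD truncation strictly increases the Frobenius distance, by the strict singular-value gap provided by \cref{TLSsing:lem}.
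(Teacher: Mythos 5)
Your proof is correct and its computational core coincides with the paper's: both reduce the problem to the best rank-$n$ Frobenius approximation of $[\quasi{B}\ \ \quasi{A}]$ via the quasimatrix Eckart--Young--Mirsky lemma (\cref{bestlowrankQuasi:lem}), both invoke \cref{TLSsing:lem} to get the singular-value gap (hence uniqueness of the truncation) and the invertibility of $V_{22}$, and both read the eigenstructure off the relation $\quasi{\hat A}=\quasi{\hat B}\hat Z$ with $\hat Z$ similar to $-V_{12}V_{22}^{-1}$. Where you genuinely differ is in how the eigenpair constraint is connected to the rank constraint. The paper inserts an intermediate total-least-squares problem \eqref{eq:eigQuasiTLS} with the constraint $\mathrm{range}(\quasi{\hat A})\subseteq\mathrm{range}(\quasi{\hat B})$ and proves that its infimum equals that of \eqref{eq:eigQuasiMinPert} by a closure argument (diagonalizable matrices are dense, and the objective is continuous). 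You instead relax directly to ``$\mathrm{rank}([\quasi{\hat B}\ \ \quasi{\hat A}])\le n$'', compute the unconstrained rank-$n$ optimum, and then verify a posteriori whether its null space $\mathrm{range}\bigl[\begin{smallmatrix}V_{12}\\ V_{22}\end{smallmatrix}\bigr]$ admits a basis of the form $\bigl[\begin{smallmatrix}-V\Lambda\\ V\end{smallmatrix}\bigr]$; this is a legitimate and arguably cleaner route to the proposition as stated, since feasibility of the relaxed optimum immediately yields optimality and uniqueness for the original problem. The one place where your shortcut does not fully substitute for the paper's argument is your closing claim that, when $V_{12}V_{22}^{-1}$ is not diagonalizable, ``the infimum is not attained'': showing that the infimum of \eqref{eq:eigQuasiMinPert} still equals the unconstrained rank-$n$ minimum (rather than some strictly larger, possibly attained value) is exactly what the paper's density-of-diagonalizable-matrices step provides, and your uniqueness-of-the-truncation observation alone does not deliver it. For the ``only if'' direction of the proposition this is harmless, since infeasibility of the stated formulas already falsifies the claim, but if you want the stronger non-attainment statement you should add the closure argument.
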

\begin{proof}
We prove the result in two steps. In the first step we just extend the argument by Ito and Murota~\cite[Thm. 2, part i)]{itomurota2016} to the case of quasimatrix-matrix objects by showing that \eqnref{eq:eigQuasiMinPert} is equivalent to the following continuous-discrete total least-squares problem
\begin{equation} 
\label{eq:eigQuasiTLS}
\left\{ 
\begin{array}{ll}
\mbox{minimize}   &  \|[\quasi{\hat A} - \quasi{A}\ \ \ \quasi{\hat B} - \quasi{B}]\|_F^2;\\
\mbox{subject to} & \quasi{\hat A}, \quasi{\hat B}\in\mathbb{C}^{(\infty+d)\times n}\\
                            & \mbox{ range} (\quasi{\hat A}) \subseteq \mbox{range} (\quasi{\hat B}).
\end{array} 
\right.
\end{equation}
Let $P_1$ denote the set of all feasible solutions to \eqnref{eq:eigQuasiMinPert} and 
assume that $\quasi{\hat A}, \quasi{\hat B}$ and $\{(\lambda_k, {\bf v}_k)\}_{k=1}^{n}$ are one of those feasible solutions. Assuming $V := [{\bf v}_1,\ {\bf v}_2,\ \dots, {\bf v}_n]$ and $\Lambda := \diag(\lambda_1, \lambda_2, \dots, \lambda_n)$, the constraints $\quasi{\hat A} {\bf v}_k = \lambda_k \quasi{\hat B} {\bf v}_k$ for $k = 1,2,\dots,n$ means that $\quasi{\hat A} V = \quasi{\hat B} V \Lambda$ and since columns of $V$ are linearly independent, \eqnref{eq:eigQuasiMinPert} is equivalent to 
$\quasi{\hat A} = \quasi{\hat B} V \Lambda V^{-1}$. This representation shows that $P_1$ is the same as the set of all $(\quasi{\tilde A}, \quasi{\tilde B}) \in \mathbb{C}^{(\infty+d) \times n} \times \mathbb{C}^{(\infty+d) \times n}$ satisfying $\quasi{\tilde A} = \quasi{\tilde B} Z$ where $Z \in \mathbb{C}^{n \times n}$ is diagonalizable.

Let $P_2$ denote the set of all $(\quasi{\tilde A}, \quasi{\tilde B}) \in \mathbb{C}^{(\infty+d) \times n} \times \mathbb{C}^{(\infty+d) \times n}$ satisfying $\quasi{\tilde A} = \quasi{\tilde B} Z$ where $Z \in \mathbb{C}^{n \times n}$ is {\em not} necessarily diagonalizable.\ This means that $P_2$ is the set of all $(\quasi{\tilde A}, \quasi{\tilde B})$ such that $\mbox{range} (\quasi{\tilde A}) \subseteq \mbox{range} (\quasi{\tilde B})$.\ Obviously $P_1 \subseteq P_2$. Since there exists a diagonalizable matrix in an arbitrarily close neighborhood of any square matrix $Z$, we have $P_2 \subseteq \overline{P_1}$ where $\overline{P_1}$ denotes the closure of $P_1$. In addition, $\|[\quasi{\tilde A} - \quasi{A}\ \ \ \quasi{\tilde B} - \quasi{B}]\|_F^2$ is a continuous function of $(\quasi{\tilde A}, \quasi{\tilde B})$. Therefore,
\[
\inf_{(\quasi{\tilde A}, \quasi{\tilde B}) \in P_1} \{ \|[\quasi{\tilde A} - \quasi{A}\ \ \ \quasi{\tilde B} - \quasi{B}]\|_F^2 \} = \inf_{(\quasi{\tilde A}, \quasi{\tilde B}) \in P_2} \{ \|[\quasi{\tilde A} - \quasi{A}\ \ \ \quasi{\tilde B} - \quasi{B}]\|_F^2 \},
\]
which means that optimal solutions to \eqnref{eq:eigQuasiMinPert} and \eqnref{eq:eigQuasiTLS} are the same.

Now in the second step we first derive explicit formulas for the unique optimal solution to \eqnref{eq:eigQuasiTLS} (and according to the first step an optimal solution to \eqnref{eq:eigQuasiMinPert} as well.) The assumption $\sigma_{n}(\quasi{B})> \sigma_{n+1}([\quasi{B}, \quasi{A}])$ together with Lemma~\ref{TLSsing:lem} implies that $V_{11}$ is nonsingular and that $\sigma_{n}([\quasi{B}\ \ \quasi{A}])> \sigma_{n+1}([\quasi{B}, \quasi{A}])$. We rewrite the formula $\quasi{\hat A} = \quasi{\hat B} Z$ as 
\[
[\quasi{\hat B}\ \  \quasi{\hat A}] \begin{bmatrix}
Z\\
-I
\end{bmatrix} = 0,
\]
implying that the rank of the augmented quasimatrix-matrix $[\quasi{\hat B}\ \ \quasi{\hat A}]$ is at most $n$. Therefore, we can view solving \eqnref{eq:eigQuasiTLS} as finding the minimal (in the Frobenius norm) rank-n perturbation $[\quasi{\hat B}\ \ \quasi{\hat A}]$ to $[\quasi{B}\ \ \quasi{A}]$. According to Lemma ~\ref{bestlowrankQuasi:lem}, the latter problem can be solved by the rank-n truncation of the SVD \eqnref{eq:svdQuasiBA}, i.e., 
\[
[\quasi{\hat B}\ \  \quasi{\hat A}] = \quasi{U}_1 \Sigma_{1:n} [V_{11}^\ast\ \ V_{21}^\ast],
\]
which is unique as $\sigma_{n}([\quasi{B}\ \ \quasi{A}])> \sigma_{n+1}([\quasi{B}\ \ \quasi{A}])$. To find the corresponding solution $\hat Z$ to $\quasi{\hat B} Z = \quasi{\hat A}$ we therefore put
\[
\quasi{U}_1 \Sigma_{1:n} V_{11}^\ast Z = \quasi{U}_1 \Sigma_{1:n}  V_{21}^\ast. 
\]
Since $V_{11}$ is nonsingular, $\hat Z = (V_{21} V_{11}^{-1})^\ast$ solves $\quasi{\hat B} Z = \quasi{\hat A}$ in~\eqnref{eq:eigQuasiTLS}. From the orthogonality of the partitioned matrix $V$ it follows that $(V_{21} V_{11}^{-1})^\ast = - V_{12} V_{22}^{-1}$.

On the other hand if $\hat Z$ corresponding with the optimal solution $[\quasi{\hat B}\ \ \quasi{\hat A}]$ to~\eqnref{eq:eigQuasiTLS} is diagonalizable, then its eigenpairs $\{(\lambda_k, v_k)\}_{k=1}^{n}$ satisfy a representation of the form $\hat Z = V \Lambda V^{-1}$ which by $\quasi{\hat B} \hat Z = \quasi{\hat A}$ means that $(\quasi{\hat B}, \quasi{\hat A})$ is an optimal solution to~\eqnref{eq:eigQuasiMinPert}. Conversely, if 
$(\quasi{\hat B}, \quasi{\hat A})$ is an optimal solution to~\eqnref{eq:eigQuasiMinPert}, then $\quasi{\hat A} = \quasi{\hat B} V \Lambda V^{-1}$ which means that $\hat Z = V \Lambda V^{-1}$ is diagonalizable.
\end{proof}

\end{document}